\newcommand{\C}{\mathbb{C}}
\newcommand{\Z}{\mathbb{Z}}
\newcommand{\QQ}{\mathbb{Q}}
\newcommand{\NN}{\mathbb{N}}
\newcommand{\PP}{\mathbb{P}}
\newcommand{\LL}{\mathbb{L}}
\newcommand{\A}{\mathbb{A}}
\newcommand{\XX}{\mathcal X}
\newcommand{\YY}{\mathcal Y}
\newcommand{\MM}{\mathcal M}
\newcommand{\ZZZ}{\mathcal Z}
\newcommand{\WWW}{\mathcal W}
\newcommand{\gr}{\hbox{Gr}}
\newcommand{\wt}{\widetilde}
\newcommand{\ima}{\hbox{Im}}
\newcommand{\rom}{\romannumeral}
\newtheorem{theorem}{Theorem}[section]
\newtheorem{claim}[theorem]{Claim}
\newtheorem{lemma}[theorem]{Lemma}
\newtheorem{corollary}[theorem]{Corollary}
\newtheorem{proposition}[theorem]{Proposition}
\newtheorem{nonumbering}{Theorem}
\newtheorem{convention}{Conventions}
\theoremstyle{definition}
\newtheorem{remark}[theorem]{Remark}
\newtheorem{definition}[theorem]{Definition}
\newtheorem{nonumberingt}{Acknowledgements}
\begin{document}
\author[Robert Laterveer]
{Robert Laterveer}

\address{Institut de Recherche Math\'ematique Avanc\'ee,
CNRS -- Universit\'e 
de Strasbourg,\
7 Rue Ren\'e Des\-car\-tes, 67084 Strasbourg CEDEX,
FRANCE.}
\email{robert.laterveer@math.unistra.fr}

\title[On the motive of intersections of two Grassmannians]{On the motive of intersections of two Grassmannians in $\PP^9$}

\begin{abstract} Using intersections of two Grassmannians in $\PP^9$, Ottem--Rennemo and Borisov--C\u{a}ld\u{a}raru--Perry have exhibited pairs of Calabi-Yau threefolds $X$ and $Y$ that are deformation equivalent, L--equivalent and derived equivalent, but not birational.
To complete the picture, we show that $X$ and $Y$ have isomorphic Chow motives.
\end{abstract}

\keywords{Algebraic cycles, Chow groups, motives, Calabi--Yau varieties, derived equivalence}

\subjclass{Primary 14C15, 14C25, 14C30.}

\maketitle

\section{Introduction}

\noindent
Let $\hbox{Var}(\C)$ denote the category of algebraic varieties over the complex numbers $\C$, and let
 $K_0(\hbox{Var}(\C))$ denote the Grothendieck ring. This ring is a rather mysterious object. Its intricacy
is highlighted by Borisov \cite{Bor}, who showed that the class of the affine line $\LL$ is a zero--divisor in $K_0(\hbox{Var}(\C))$. Following on Borisov's pioneering result, recent years have seen a flurry of constructions of pairs of Calabi--Yau varieties 
 $X, Y$ that are {\em not\/} birational (and so $[X]\not=[Y]$ in the Grothendieck ring), but
  \[          ([X] -[Y]) \LL^r=0\ \ \ \hbox{in}\ K_0(\hbox{Var}(\C)) \ ,\]
  i.e., $X$ and $Y$ are ``L--equivalent'' in the sense of \cite{KS}.
  In most cases, the constructed varieties $X$ and $Y$ are also derived equivalent \cite{IMOU}, \cite{IMOU2}, \cite{Mar}, \cite{Kuz}, \cite{OR}, \cite{BCP}, \cite{KS}, \cite{HL}, \cite{Man}, \cite{KR}, \cite{KKM}.
  
   According to a conjecture made by Orlov \cite[Conjecture 1]{Or}, derived equivalent smooth projective varieties should have isomorphic Chow motives. This conjecture 
   is true for $K3$ surfaces \cite{Huy}, but is still widely
   open for Calabi--Yau varieties of dimension $\ge 3$. In \cite{L}, \cite{L2}, I verified Orlov's conjecture for the Calabi--Yau threefolds of Ito--Miura--Okawa--Ueda \cite{IMOU}, resp. the Calabi--Yau threefolds of Kapustka--Rampazzo \cite{KR}.
  The aim of the present note is to check that Orlov's conjecture is also true for the Calabi--Yau threefolds studied recently by Borisov--C\u{a}ld\u{a}raru--Perry \cite{BCP},   and independently by Ottem--Rennemo \cite{OR}.
  
  The threefolds of \cite{OR}, \cite{BCP} are called {\em GPK$^3$ threefolds}. The shorthand ``GPK$^3$'' stands for Gross--Popescu--Kanazawa--Kapustka--Kapustka,
   the authors of the papers \cite{GP}, \cite{K}, \cite{K2}, \cite{K3} where they first appeared (the shorthand ``GPK$^3$'' is coined in \cite{BCP}). These threefolds are constructed as follows. Given $W$ a $10$--dimensional vector space over $\C$, let $\PP:=\PP(W)$. Let $V$ be a $5$--dimensional vector space over $\C$, and choose isomorphisms 
    \[\phi_i\colon \wedge^2 V\to W\ , \ \ i=1,2\ .\] 
    Composing the Pl\"ucker embedding with the induced isomorphisms $\phi_i\colon\PP(\wedge^2 V)\cong\PP$, one obtains two embeddings of the Grassmannian $Gr(2,V)$ in $\PP$, whose images are denoted $Gr_i$, $i=1,2$. For $\phi_i$ generic, the intersection
   \[ X:= Gr_1\cap Gr_2\ \ \ \subset\ \PP \]
   is a smooth Calabi--Yau threefold, called a GPK$^3$ threefold. Let $Gr_i^\vee$ be the projective dual of $Gr_i$. The intersection
   \[ Y:=Gr_1^\vee\cap Gr_2^\vee\ \ \ \subset\ \PP^\vee \]  
   is again a smooth Calabi--Yau threefold, and it is deformation equivalent to $X$. The pair $X,Y$ are called {\em GPK$^3$ double mirrors\/}, and $X,Y$ are known to be Hodge equivalent, derived equivalent, L--equivalent, and in general not birational \cite{BCP}, \cite{OR}. In this note, we prove the following:

  \begin{nonumbering}[=theorem \ref{main}] Let $X, Y$ be two GPK$^3$ double mirrors. Then there is an isomorphism of Chow motives
   \[ h(X)\cong h(Y)\ \ \ \hbox{in}\ \MM_{{\rm rat}}\ .\]
   \end{nonumbering}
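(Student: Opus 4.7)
The plan is to upgrade the L--equivalence established in \cite{OR,BCP} to an isomorphism of Chow motives, following the same general strategy as in my earlier treatments of the double mirrors of Ito--Miura--Okawa--Ueda and of Kapustka--Rampazzo \cite{L,L2}. In those cases the key geometric input was a ``linking'' variety $Z$ equipped with natural morphisms to both Calabi--Yau threefolds, from which the motivic isomorphism was extracted by comparing two decompositions of $h(Z)$; I expect an analogous construction to work here.

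Concretely, I would first identify an auxiliary variety $Z$ built from the incidence geometry in $\PP \times \PP^\vee$, together with morphisms $p \colon Z \to X$ and $q \colon Z \to Y$; this is essentially the geometric object underlying the L--equivalence proved in \cite{OR,BCP}, and its existence relies on the self--duality of $Gr(2,V)$. I would arrange for $p$ and $q$ to be compositions of blow--ups with smooth centres and of projective bundle projections, so that Manin's projective bundle and blow--up identities apply and produce two decompositions of $h(Z)$: one of the form ``$h(X)$ plus auxiliary summands'' and a symmetric one involving $h(Y)$. The auxiliary summands are Tate twists of motives of loci $E_i$ (on the $X$--side) and $F_j$ (on the $Y$--side) over which $p$ and $q$ respectively fail to be isomorphisms.

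Then, using the intrinsic symmetry of the GPK$^3$ construction --- swapping $\phi_1 \leftrightarrow \phi_2$ and passing to projective duals --- one identifies $\bigoplus_i h(E_i)(-a_i) \cong \bigoplus_j h(F_j)(-b_j)$ in $\MM_{\rm rat}$, so that cancelling the common summands yields $h(X) \cong h(Y)$. The main obstacle will be the geometric analysis of $Z$: locating the centres $E_i, F_j$, verifying their smoothness, and showing that their motives are of sufficiently simple type (Lefschetz motives or motives of curves, at least for generic $\phi_i$) so that the required matching is explicit. A secondary technical point, should the intermediate identifications only be available at the homological level, is to invoke a Franchetta--type property for the universal family of GPK$^3$ double mirrors to promote the argument back to $\MM_{\rm rat}$; I would expect this to hold by analogy with \cite{L,L2}, but verifying it could be the delicate part.
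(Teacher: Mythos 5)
There is a genuine gap, and it is located exactly where you expect the main geometric input to come from. You propose to find a smooth linking variety $Z$ with morphisms $p\colon Z\to X$ and $q\colon Z\to Y$ that are compositions of blow--ups with smooth centres and projective bundle projections, and then to compare the two resulting Manin decompositions of $h(Z)$. Such a $Z$ cannot exist: a tower of smooth blow--ups and projective bundles over $X$ is stably birational to $X$, so the existence of your $Z$ would force $X$ and $Y$ to be stably birational, hence (by the MRC argument as in \cite{Bor}) birational. But the whole point of the GPK$^3$ examples is that $X$ and $Y$ are \emph{not} birational (\cite[Theorem 1.2]{BCP}, \cite[Theorem 4.1]{OR}); this is what makes them interesting for L--equivalence in the first place. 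So the decomposition--and--cancellation strategy (which also has the secondary problem that cancelling direct summands in $\MM_{\rm rat}$ is not automatic) is not available here, and no amount of care in choosing centres $E_i$, $F_j$ will repair it.

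The actual route is different in kind. The linking object is the incidence variety $Q=\sigma\times_{\PP\times\PP^\vee}(Gr_1\times Gr_2^\vee)$, which maps to $Gr_1$ and $Gr_2^\vee$ (not to $X$ and $Y$), with $5$--dimensional fibres that are hyperplane sections of $Gr(2,V)$; these fibres are well--behaved over the complements $U=Gr_1\setminus X$, $V=Gr_2^\vee\setminus Y$ and degenerate to singular, non--$\QQ$--factorial fibres exactly over $X$ resp.\ $Y$, where an extra Weil divisor class $e$ appears. The preimages $F=p^{-1}(X)$, $G=q^{-1}(Y)$ are themselves singular, so one works with piecewise trivial fibrations, higher Chow groups and operational Chow cohomology (rather than Manin identities) to show that the correspondence built from $e$ induces isomorphisms $A^i_{hom}(X)\cong A^{i+4}_{hom}(Q)$ and that $A^i_{hom}(Q)$ vanishes outside the relevant range; the universal--domain lemma of \cite{Huy} then upgrades this to $h^3(X)\cong h^{11}(Q)(-4)$ in $\MM_{\rm rat}$, and symmetry in $X$ and $Y$ gives $h^3(X)\cong h^3(Y)$. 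Your worry about promoting homological statements to rational equivalence is dealt with not by a Franchetta--type property but by a spreading--out argument (\cite[Lemma 3.2]{Vo}) that extends the isomorphism from general double mirrors (where $Q$ is smooth) to all of them. Note also that your recollection of \cite{L}, \cite{L2} as comparing two blow--up/bundle decompositions is not accurate for the same reason: those pairs are likewise non--birational, and the arguments there are of the correspondence--theoretic type just described.
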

   
The proof of theorem \ref{main} is an elementary exercice in manipulating Chow groups and correspondences, based on a nice geometric relation between $X$ and $Y$ established in \cite{BCP} (cf. proposition \ref{bcp} below). The only ingredient in the proof that may perhaps not be completely standard is the use of Bloch's higher Chow groups (\cite{B2}, cf. also section \ref{shigh} below), and some results on higher Chow groups of piecewise trivial fibrations (section \ref{sfib} below).

\vskip0.6cm

\begin{convention} In this note, the word {\sl variety\/} will refer to a reduced irreducible scheme of finite type over the field of complex numbers $\C$. 
{\bf All Chow groups will be with $\QQ$--coefficients, unless indicated otherwise:} For a variety $X$, we will write $A_j(X):=CH_j(X)_{\QQ}$ for the Chow group of dimension $j$ cycles on $X$ with rational coefficients.
For $X$ smooth of dimension $n$, the notations $A_j(X)$ and $A^{n-j}(X)$ will be used interchangeably. 

The notations 
$A^j_{hom}(X)$ (and $A^j_{AJ}(X)$) will be used to indicate the subgroups of 
homologically trivial (resp. Abel--Jacobi trivial) cycles.
For a morphism between smooth varieties $f\colon X\to Y$, we will write $\Gamma_f\in A^\ast(X\times Y)$ for the graph of $f$, and ${}^t \Gamma_f\in A^\ast(Y\times X)$ for the transpose correspondence.

We will write  $\MM_{{\rm rat}}$ for the contravariant category of Chow motives (i.e., pure motives as in \cite{Sc}, \cite{MNP}, with Hom--groups defined using $A^\ast(X\times Y)_{}$).

We will write $H^j(X)=H^j(X,\QQ)$ for singular cohomology, and $H_j(X)=H_j^{BM}(X,\QQ)$ for Borel--Moore homology. 
\end{convention}

 \section{The Calabi--Yau threefolds}
 
 In this section we consider GPK$^3$ threefolds, as defined in the introduction.
 
\begin{proposition}[Ottem--Rennemo, Kanazawa \cite{OR}, \cite{K3}] The family of GPK$^3$ threefolds is locally complete. A GPK$^3$ threefold $X$ has Hodge numbers
  \[ h^{1,1}(X)=1\ ,\ \ \ h^{2,1}(X)=51\ .\]
  \end{proposition}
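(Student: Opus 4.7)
The plan is to establish three things in turn: smoothness and the Calabi--Yau property for a generic $X$, the values $h^{1,1}=1$ and $h^{2,1}=51$, and a parameter count showing the family is locally complete.

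First I would verify that for generic $\phi_i$, the intersection $X=Gr_1\cap Gr_2$ is smooth of the expected dimension $6+6-9=3$ by a Bertini-type argument: fix $Gr_1$ and let $Gr_2$ vary in the $PGL(W)$--orbit of $Gr(2,V)\subset\PP^9$; transversality is an open condition and is non-empty once one produces a single transverse example. For the Calabi--Yau property, I compute $c_1(X)$ from the normal bundle sequence
\[ 0 \to TX \to T\PP^9|_X \to N_{Gr_1/\PP^9}|_X \oplus N_{Gr_2/\PP^9}|_X \to 0 . \]
From the Euler sequence on $\PP^9$ and the canonical bundle of $Gr(2,5)$, one has $\det N_{Gr_i/\PP^9}=\OO(5)|_{Gr_i}$, so $c_1(X)=(10-5-5)H|_X=0$ and $\omega_X\cong\OO_X$. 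Simple connectedness (giving $h^{1,0}=h^{2,0}=0$) then follows from the Lefschetz-type theorem for subvarieties of Grassmannians cut out by sections of globally generated bundles.

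Next I would compute the Hodge numbers. For $h^{1,1}=1$, I realise $X$ inside $Gr_1\cong Gr(2,5)$ as the zero locus of a regular section of the rank $3$ globally generated bundle $N_{Gr_2/\PP^9}|_{Gr_1}$, and invoke Sommese's extension of the Lefschetz hyperplane theorem to conclude that the restriction $H^2(Gr_1,\QQ)\to H^2(X,\QQ)$ is an isomorphism; since $h^{1,1}(Gr(2,5))=1$, the claim follows. The Euler characteristic $\chi(X)$ is then computed via Chern classes using
\[ c(TX) = \frac{c(T\PP^9)|_X}{c(N_{Gr_1/\PP^9})|_X \cdot c(N_{Gr_2/\PP^9})|_X} , \]
pushed forward to $\PP^9$ against the class $[X]=[Gr_1]\cdot[Gr_2]$ of degree $25$; the Calabi--Yau relation $\chi(X)=2\bigl(h^{1,1}-h^{2,1}\bigr)$ then yields $h^{2,1}=51$.

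For local completeness I would count parameters. Each embedded $Gr_i\subset\PP^9$ is determined by $\phi_i$ up to the action of $\aut\bigl(Gr(2,V)\subset\PP(\wedge^2 V)\bigr)=PGL(V)$, so it moves in a family of dimension $\dim PGL(W)-\dim PGL(V)=99-24=75$. Pairs $(Gr_1,Gr_2)$ up to simultaneous projective equivalence then form a family of dimension $2\cdot 75-99=51=h^{2,1}(X)$. Since $h^{2,1}$ is the dimension of the Kuranishi space of a Calabi--Yau threefold (Bogomolov--Tian--Todorov), this equality of dimensions shows the family is locally complete.

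The main obstacle will be the rigorous verification of $h^{1,1}(X)=1$: Sommese-type theorems require genuine positivity of the bundle cutting out $X$ inside $Gr_1$, so one must check carefully that $N_{Gr_2/\PP^9}|_{Gr_1}$ has the requisite ampleness. An alternative is to compute $H^1(X,\Omega^1_X)$ directly via the Koszul resolution of $\OO_X$ inside $\OO_{Gr_1}$ combined with Bott-type vanishing on $Gr(2,5)$; either route requires some care with the vanishing theorems invoked.
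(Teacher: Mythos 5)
The paper does not reprove this statement: it simply cites \cite[Proposition 3.1]{OR} for local completeness and \cite[Proposition 2.16]{K3} for the Hodge numbers, so your proposal is really an attempt at an independent proof, and it has genuine gaps. The central one is the object $N_{Gr_2/\PP^9}|_{Gr_1}$: the normal bundle of $Gr_2$ is a sheaf on $Gr_2$ only (it is $\wedge^2 Q\otimes\OO(1)$ for the tautological quotient bundle $Q$ of $Gr_2$), it does not extend to $\PP^9$, and hence cannot be restricted to $Gr_1$. Inside $Gr_1$, the threefold $X$ is cut out by the five Pl\"ucker quadrics of $Gr_2$, i.e.\ by a non-regular section of $\OO(2)^{\oplus 5}$ in codimension $3$; it is \emph{not} presented as the zero locus of a regular section of a globally generated rank-$3$ bundle on $Gr(2,V)$, and this is exactly why the GPK$^3$ threefolds are not of ``complete intersection in a homogeneous space'' type. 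So the obstacle is not the ampleness check you flag at the end, but the existence of the bundle itself, and both your Sommese argument for $h^{1,1}=1$ and your Lefschetz-type argument for $h^{1,0}=h^{2,0}=0$ collapse at that point. A second gap is local completeness: the parameter count $2(99-24)-99=51=h^{2,1}$ is only a consistency check. To conclude that the family is locally complete you must also know that the Kodaira--Spencer map from the $51$-dimensional parameter space to $H^1(X,T_X)$ is injective (equivalently, that the family is effectively parametrized, e.g.\ that the pair of Grassmannians can be recovered from $X$ up to finitely many choices); this is the actual content of \cite[Proposition 3.1]{OR} and is not supplied by Bogomolov--Tian--Todorov plus equality of dimensions.

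Two smaller points. The smoothness, the identity $\det N_{Gr_i/\PP^9}=\OO(5)|_{Gr_i}$ and hence $\omega_X\cong\OO_X$ are fine. But the Euler characteristic recipe ``pushed forward to $\PP^9$ against $[X]=[Gr_1]\cdot[Gr_2]$'' cannot work as stated: $c_2$ and $c_3$ of the normal bundles involve the Schubert classes $\sigma_2,\sigma_3$, which are not restrictions of classes from $\PP^9$, so degree-$25$ bookkeeping in $\PP^9$ does not determine the relevant intersection numbers. The repair is to compute on each Grassmannian separately, using that transversality gives $[X]=5\sigma_1^3$ in $A^3(Gr_i)$ and $N_{Gr_i/\PP^9}\cong\wedge^2 Q\otimes\OO(1)$; Schubert calculus then yields $\int_X c_3(T_X)=-100$, consistent with $h^{1,1}=1$, $h^{2,1}=51$ --- but note that this route only gives $h^{2,1}$ \emph{after} $h^{1,1}=1$ and the vanishing of $h^{1,0},h^{2,0}$ have been established by some other means, which is precisely the part that is missing.
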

  
 \begin{proof} The first statement is \cite[Proposition 3.1]{OR}. The Hodge numbers are computed in \cite[Proposition 2.16]{K3}.
 \end{proof}

 \begin{theorem}[Ottem--Rennemo, Borisov--C\u{a}ld\u{a}raru-Perry \cite{OR}, \cite{BCP}]\label{or} Let $X,Y$ be a general pair of GPK$^3$ double mirrors. Then $X$ and $Y$ are not birational, and so
   \[ [X]\not= [Y]\ \ \ \hbox{in}\ K_0(\hbox{Var}(\C)) \ .\]
  
  However, one has
   \[  ([X] -[Y]) \LL^4=0\ \ \ \hbox{in}\ K_0(\hbox{Var}(\C)) \ .\]
   
   Moreover, $X$ and $Y$ are derived equivalent, i.e. there is an isomorphism of bounded derived categories
     \[ D^b(X)\cong D^b(Y)\ .\]
     
 In particular, there is an isomorphism of polarized Hodge structures
     \[ H^3(X,\Z)\ \cong\ H^3(Y,\Z)\ .\]
    \end{theorem}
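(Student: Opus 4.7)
The statement collects four assertions, which I would attack in parallel using four different techniques.

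For the non-birationality, I would exploit that $h^{1,1}(X)=h^{1,1}(Y)=1$ forces both $X$ and $Y$ to have Picard rank one: since a Calabi--Yau threefold of Picard rank one admits no flopping contractions, any birational map $X\dashrightarrow Y$ would extend to an isomorphism. It therefore suffices to produce a single discrete invariant distinguishing generic pairs. Following the strategy of Ottem--Rennemo, I would compute low-degree Gromov--Witten or Gopakumar--Vafa invariants on both sides via Schubert calculus on $Gr(2,V)$, and verify that they disagree for generic $(\phi_1,\phi_2)$.

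For the L-equivalence, I would set up a Cayley-type identity in $K_0(\hbox{Var}(\C))$ using the projective duality between $\PP=\PP(W)$ and $\PP^\vee$. Both $Gr_i\subset\PP$ and $Gr_i^\vee\subset\PP^\vee$ are six-dimensional, and one can build an incidence variety parametrizing pairs $(p,H)$ with $p$ in one Grassmannian and $H$ a hyperplane section through $p$ meeting the other; computing its class in two dual ways and stratifying by the defect of intersection then yields, after cancellation, the relation $([X]-[Y])\LL^4=0$.

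For the derived equivalence, the right tool is Kuznetsov's homological projective duality for the Pl\"ucker embedding $Gr(2,5)\subset\PP^9$. The HPD partner of $Gr(2,5)$ is a noncommutative resolution of $Gr(2,5)^\vee$, and for a generic transverse pair of two-dimensional linear sections (as furnished by $\phi_1,\phi_2$) the noncommutative components collapse, producing a Fourier--Mukai equivalence $D^b(X)\simeq D^b(Y)$. From this one extracts the final statement by transporting Hodge classes via the Fourier--Mukai kernel and restricting to the only odd-degree component of the cohomology of a Calabi--Yau threefold, which yields the polarized isomorphism $H^3(X,\Z)\cong H^3(Y,\Z)$.

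The main obstacle will be the HPD step: the noncommutative resolution of the dual Grassmannian must be understood explicitly enough to verify that transversality for a generic GPK$^3$ datum causes its non-trivial component to vanish. The remaining three steps are essentially mechanical once the derived equivalence is available, or (in the case of non-birationality) once the correct distinguishing enumerative invariant has been identified.
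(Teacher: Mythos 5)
Your plan for the non-birationality part contains a fatal flaw. $X$ and $Y$ are deformation equivalent: both are members of the same locally complete $51$--dimensional family of GPK$^3$ threefolds (proposition 2.1 of the paper, \cite{OR}), and under this identification the hyperplane classes correspond (both have Picard rank $1$ and degree $25$ in $\PP^9$). Gromov--Witten, hence Gopakumar--Vafa, invariants are deformation invariant, so every invariant of the kind you propose takes \emph{identical} values on $X$ and $Y$; they are moreover derived equivalent, which again forces agreement of the standard enumerative data. So no ``single discrete invariant'' of this type can distinguish a generic pair, and the first step of your argument cannot be completed. The actual proofs in \cite{OR} (Theorem 4.1) and \cite{BCP} (Theorem 1.2) proceed quite differently: Picard rank $1$ plus triviality of $K_X$ shows that any birational map $X\dashrightarrow Y$ would be an isomorphism (no flops are available), an isomorphism would be induced by a linear automorphism of $\PP^9$ carrying the pair $\{Gr_1,Gr_2\}$ --- which is recovered intrinsically from $X$ --- to the dual pair $\{Gr_1^\vee, Gr_2^\vee\}$, and a genericity/parameter count excludes this for general $\phi_i$. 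You also omit the step from ``not birational'' to $[X]\neq[Y]$ in $K_0(\hbox{Var}(\C))$: the paper gets this from the MRC argument (non--uniruledness upgrades non--birationality to non--stable--birationality) together with the Larsen--Lunts theorem.

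On the remaining assertions, bear in mind that the paper proves this theorem purely by citation (\cite{BCP}, \cite{OR}, \cite{KP}), so you are in effect sketching reproofs. Your L--equivalence sketch via the incidence correspondence is indeed the mechanism of \cite{BCP}: the relevant variety is exactly $Q=\sigma\times_{\PP\times\PP^\vee}(Gr_1\times Gr_2^\vee)$ of proposition \ref{bcp}, fibred over $Gr_1$ and over $Gr_2^\vee$ with fibres jumping over $X$ resp. $Y$. For the derived equivalence, however, $X$ is \emph{not} a linear section of a single Grassmannian (it is an intersection of two translates of $Gr(2,5)$ in $\PP^9$), so homological projective duality for $Gr(2,5)\subset\PP^9$ does not apply in the form you state; one needs either the direct argument of \cite{OR} or the categorical joins (``nonlinear HPD'') of Kuznetsov--Perry \cite{KP}. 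The passage from the Fourier--Mukai equivalence to the polarized isomorphism $H^3(X,\Z)\cong H^3(Y,\Z)$ is the route of \cite{OR} (Proposition 2.1), but it requires the integral Mukai--lattice argument, not just restriction to odd cohomology. These latter points are repairable imprecisions; the enumerative-invariant step is not.
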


     \begin{proof} Non--birationality is \cite[Theorem 1.2]{BCP}, and independently \cite[Theorem 4.1]{OR}. 
     Thanks to the birational invariance of the MRC--fibration, $X$ and $Y$ are not stably birational (cf. \cite[Proof of Theorem 2.12]{Bor}). The celebrated Larsen--Lunts result \cite{LL} implies that $[X]\not=[Y]$ in the Grothendieck ring.
     
     The L--equivalence is \cite[Theorem 1.6]{BCP}; it is a corollary of the geometric relation of proposition \ref{bcp} below. Derived equivalence is proven in \cite[Proposition 1.1]{OR},
     and also in \cite[Section 6.1]{KP}.
     
     The isomorphism of Hodge structures is a corollary of the derived equivalence, in view of
     \cite[Proposition 2.1 and Remark 2.3]{OR}. 
      \end{proof}

The argument of this note crucially relies on the following (for the notion of {\em piecewise trivial fibration\/}, cf. definition \ref{pt} below).

\begin{proposition}[Borisov--C\u{a}ld\u{a}raru--Perry \cite{BCP}]\label{bcp} Let $X,Y$ be a pair of GPK$^3$ double mirrors. There is a diagram
   \begin{equation}\label{diag} \begin{array}[c]{ccccccccc}   && F & \xrightarrow{\iota_F} & Q & \xleftarrow{} & G && \\
                &&&&&&&&\\
                   &{}^{\scriptstyle p_X} \swarrow \ \ && {}^{\scriptstyle p} \swarrow \ \ \ & & \ \ \ \searrow {}^{\scriptstyle q} & & \ \ \searrow {}^{\scriptstyle q_Y} & \\
                   &&&&&&&&\\
                   X & \xrightarrow{\iota} & Gr_1 &  &  & & Gr_2^\vee & \leftarrow & Y\\
                   \end{array}\end{equation}
                     Here, 
                     \[ Q:=  \sigma\times_{\PP\times\PP^\vee} (Gr_1\times Gr_2^\vee)  \]
 is the intersection of the natural incidence divisor $\sigma\subset\PP\times\PP^\vee$ with the product $Gr_1\times Gr_2^\vee\subset\PP\times\PP^\vee$,                   
                     the morphisms $p$ and $q$ are induced by the natural projections, the closed subvarieties $F,G$ are defined as $p^{-1}(X)$ resp. $q^{-1}(Y)$, and $p_X, q_Y$ are defined as the restrictions $p\vert_F$ resp. $q\vert_G$. The morphisms $p_X,q_Y$ are piecewise trivial fibrations with fibres $F_x$ resp. $G_y$ verifying
      \[ \begin{split} &A_i(F_x)_=A_i(G_y)_=\begin{cases} \QQ &\hbox{if\ }i=0,1,5\ ,\\
                                                                \QQ^2 &\hbox{if\ } i=2,3,4\ ,\\
                                                                \end{cases} \\
                                        &H_j(F_x)=H_j(G_y)=0\ \ \ \hbox{for\ all\ $j$\ odd}\ ,
                                   \end{split} \]                                                                                                           
                                   for all $x\in X, y\in Y$. However, over the open complements 
                   \[ U:=Gr_1\setminus X\ ,\ \ \ V:=Gr_2^\vee\setminus Y \ ,\]
                the restrictions $p_U:= p\vert_{p^{-1}(U)}, q_V:=q\vert_{q^{-1}(V)}$ are piecewise trivial fibrations with fibres $Q_u:=p^{-1}(u)$ resp. $Q_v:=q^{-1}(v)$ verifying
        \[    A_i(Q_u)_=A_i(Q_v)_=\begin{cases} \QQ &\hbox{if\ }i=0,1,4,5\ ,\\
                                                                \QQ^2 &\hbox{if\ } i=2,3\ ,\\
                                                                \end{cases} \]
                                                                for all $u\in U, v\in V$.         
\end{proposition}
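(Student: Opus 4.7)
The plan is to unpack the fiber structure of the projection $p : Q \to Gr_1$, and symmetrically of $q : Q \to Gr_2^\vee$. For any $x \in Gr_1 \subset \PP$, the definition of the incidence divisor $\sigma$ identifies $p^{-1}(x)$ with the set $\{H \in Gr_2^\vee : x \in H\}$, which is canonically the hyperplane section $Gr_2^\vee \cap H_x$, where $H_x \subset \PP^\vee$ is the hyperplane of $\PP^\vee$-points $H$ passing through $x$. The biduality theorem applied to $Gr_2^\vee$ (a Grassmannian abstractly isomorphic to $Gr(2,5)$, embedded in $\PP^\vee$) then tells us $H_x$ is tangent to $Gr_2^\vee$ if and only if $x \in (Gr_2^\vee)^\vee = Gr_2$. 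Hence $p^{-1}(x)$ is a smooth $5$-dimensional hyperplane section of $Gr_2^\vee$ for $x \in U = Gr_1 \setminus X$, and a hyperplane section with isolated singular locus for $x \in X = Gr_1 \cap Gr_2$. The analogue for $q$ yields the parallel discussion for $y \in Y$ versus $v \in V$.

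For a smooth fiber $Q_u$: the Lefschetz hyperplane theorem applied to $Gr(2,5)$ (which has no odd cohomology and whose Chow groups are spanned by Schubert classes), combined with a Schubert-style cellular decomposition of $Q_u$ obtained from a flag adapted to $H_x$, yields $A_i(Q_u) = \QQ$ for $i \in \{0,1,4,5\}$, $A_i(Q_u) = \QQ^2$ for $i \in \{2,3\}$, together with the vanishing of odd Borel--Moore homology. For a singular fiber $F_x$: the singular locus is a finite set of ordinary-double-point singularities (arising from tangencies of $H_x$ with $Gr_2^\vee$), and blowing them up yields a smooth resolution $\widetilde{F}_x$ whose exceptional divisor is a disjoint union of smooth $4$-dimensional quadrics; combining the blow-up exact sequence with the localization sequence for Chow groups and Borel--Moore homology then produces exactly one extra algebraic class in codimension $1$ relative to the smooth case, giving $A_4(F_x) = \QQ^2$ with the remaining groups as above, and preserving the vanishing of odd Borel--Moore homology.

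The remaining ingredient is piecewise triviality of $p_X$, $p_U$, $q_Y$, $q_V$. Over $X$ (respectively $U$) all fibers are pairwise isomorphic, and $F = p^{-1}(X) \subset Gr_2^\vee \times X$ sits inside the trivial family $Gr_2^\vee \times X \to X$ as the zero locus of a single linear equation depending algebraically on $x$. Iterating a standard generic-triviality argument via Noetherian induction on the base then produces a finite stratification of $X$ into locally closed strata over which the family is trivial (and similarly for $U$, $Y$, $V$). Establishing this algebraic piecewise trivialization rigorously --- as opposed to mere analytic or \'etale triviality --- is the most delicate step of the argument; it is exactly what is carried out in the geometric analysis of Borisov--C\u{a}ld\u{a}raru--Perry that underlies their L-equivalence result, and for this plan I would simply cite it as a black box.
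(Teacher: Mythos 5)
Your identification of the fibres of $p$ as hyperplane sections of $Gr_2^\vee$, and the biduality criterion for when the section is singular (namely $x\in (Gr_2^\vee)^\vee=Gr_2$, i.e. $x\in X$), agree with the geometry used in the paper (which simply imports the explicit fibre descriptions from \cite[Section 7]{BCP}). But your analysis of the singular fibres contains a genuine error: the singular locus of $F_x$ is \emph{not} a finite set of ordinary double points. The Grassmannian $Gr(2,5)\subset\PP^9$ has positive dual defect: its projective dual is the rank--$2$ locus in $\PP(\wedge^2 V^\vee)$, again a copy of $Gr(2,5)$ of dimension $6$ (codimension $3$, not a hypersurface), so \emph{every} tangent hyperplane has a $2$--dimensional contact locus. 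Concretely, for $x\in X$ the form $x_2$ has rank $2$, its kernel in $V^\vee$ is $3$--dimensional, and the singular locus of the hyperplane section $F_x\subset Gr_2^\vee$ is $Gr(2,\ker x_2)\cong\PP^2$. Consequently the proposed resolution by blowing up isolated nodes with $4$--dimensional quadric exceptional divisors does not exist, and the bookkeeping built on it (``exactly one extra algebraic class in codimension $1$'') is not justified by your argument: even if the singularities were nodes, blowing them up would insert classes of the exceptional quadrics in all even degrees, not just one class in $A_4$, so the claimed blow-up-plus-localization computation would not produce the stated groups without substantial extra cancellation arguments.

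The conclusion you are aiming for ($A_4(F_x)=\QQ^2$, with the other groups as in the smooth case and no odd homology) is correct, but the paper reaches it by a different and sound route, following \cite[Proof of Lemma 7.2]{BCP}: $F_x$ is stratified as $Z\sqcup C$ with $Z\cong\PP^2$ (the locus $A_y\subset\ker x_2$) and $C\to\PP^2$ a Zariski locally trivial fibration with fibre $\PP^3\setminus\PP^1$; the localization sequences for Chow groups and Borel--Moore homology, together with the fact that both strata have trivial Chow groups and no odd homology, give the groups of $F_x$ and the bijectivity of the cycle class maps (alternatively, via $[F_x]=(\LL^2+\LL+1)(\LL^3+\LL^2+1)$ in the Grothendieck ring and virtual Betti numbers). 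A parallel stratification (a $3$--dimensional quadric plus a fibration over $\PP^3$ with fibre $\PP^2\setminus\PP^1$) handles the smooth fibres $Q_u$, whereas your appeal to Lefschetz plus an unproved ``Schubert-style'' cell decomposition of $Q_u$ would also need this kind of explicit justification. Your black-boxing of the piecewise triviality to \cite{BCP} is consistent with what the paper does, so the remaining issue is precisely the singular-fibre step, which needs to be replaced by the stratification argument above.
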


\begin{proof} The diagram is constructed in \cite[Section 7]{BCP}. The computation of homology and Chow groups of the fibres of $p$ easily follows from the explicit description of the fibres as hyperplane sections of the Grassmannian $Gr(2,V)$ \cite[Section 7]{BCP}.
Precisely, as explained in \cite[Proof of Lemma 7.2]{BCP}, there exists a closed subvariety $Z\subset F_x$ such that $Z\cong\PP^2$, and the complement $C:=F_x\setminus Z$ is a Zariski locally trivial fibration over $\PP^2$, with fibres isomorphic to $\PP^3\setminus\PP^1$. Since neither $C$ nor $Z$ have odd--degree Borel--Moore homology, the same holds for $F_x$. As for even--degree homology, there is a commutative diagram with exact rows
  \[ \begin{array}[c]{cccccc}
     \to A_i(Z) &\to& A_i(F_x) &\to& A_i(C)&\to 0\\
     &&&&&\\
    \ \  \downarrow{\scriptstyle\cong}&&\downarrow&&\ \ \downarrow{\scriptstyle\cong}\\
     &&&&&\\
    0  \to H_{2i}(Z) &\to& H_{2i}(F_x) &\to& H_{2i}(C)&\ \ \to 0\ ,\\
    \end{array}\]
    where vertical arrows are cycle class maps. The left and right vertical arrow are isomorphisms, because of the above explicit description of $Z$ and $C$.
  This implies that the cycle class map induces isomorphisms $A_i(F_x)\cong H_{2i}(F_x)$ for all $i$. 

The bottom exact sequence of this diagram shows that
  \[ H_{2i}(F_x) = \begin{cases} H_{2i}(C) &\hbox{if\ }i=3,4,5\ ,\\
                                                   H_{2i}(C)\oplus \QQ &\hbox{if\ }i=0,1,2\ .\\
                                                   \end{cases}\]

Next, one remarks that the open $C$ (being a fibration over $\PP^2$ with fibre $T\cong \PP^3\setminus\PP^1$) has
  \[ H_{2i}(C)=\bigoplus_{\ell+m=2i} H_\ell(\PP^2)\otimes H_m(T)=\begin{cases} 0 &\hbox{if\ }i=0,1\ ,\\
                                                                                               \QQ &\hbox{if\ }i=2,5\ ,\\
                                                                                               \QQ^2 &\hbox{if\ }i=3,4\ .\\
                                                                                               \end{cases}\]
                                       Putting things together, this shows the statement for $H_{2i}(F_x)$.                                                        
                                                                                               
        (A more efficient, if less self--contained, way of determining the Betti numbers of $F_x$ is as follows. One has equality in the Grothendieck ring \cite[Lemma 7.2]{BCP}    
        \[ [ F_x]=  (\LL^2 +\LL+1)(\LL^3 +\LL^2+1)\ \ \ \hbox{in}\ K_0(\hbox{Var})\ .\]
        Let $W^\ast$ denote Deligne's weight filtration on Borel--Moore homology \cite{PS}.
       The ``virtual Betti number''
        \[ P_{2i}():= \sum_j (-1)^j \dim \gr_W^{-2i} H_j() \]
        is a functor on $K_0(\hbox{Var})$, and so 
         \[ P_{2i}(F_x)    = P_{2i}\Bigl(   (\LL^2 +\LL+1)(\LL^3 +\LL^2+1)\Bigr) = \begin{cases}     1 &\hbox{if\ }i=0,1,5\ ,\\
                                                                2 &\hbox{if\ } i=2,3,4\ .\\
                                                                \end{cases}  \]
         On the other hand, $F_x$ has no odd--degree homology, and the fact that $H_{2i}(F_x)$ is algebraic implies that $H_{2i}(F_x)$ is pure of weight $-2i$. It follows that $P_{2i}(F_x)=\dim H_{2i}(F_x)$.)

 The homology groups and Chow groups of the fibres $Q_u$ over $u\in U$ are determined similarly: according to loc. cit., there is a closed subvariety $Z\subset Q_u$ such that $Z$ is isomorphic to a smooth quadric in $\PP^4$, and the complement $Q_u\setminus W$ is a Zariski locally trivial fibration over $\PP^3$, with fibres isomorphic to $\PP^2\setminus\PP^1$.
  \end{proof}
  
  We record a lemma for later use:
  
  \begin{lemma}\label{Utriv} The open $U$ (and the open $V$) of proposition \ref{bcp} has trivial Chow groups, i.e. cycle class maps
    \[ A_i(U)\ \to\ H_{2i}(U)\ \]
    are injective.
    \end{lemma}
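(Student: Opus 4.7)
The plan is to compare the localization exact sequences in Chow groups and Borel--Moore homology associated with the closed embedding $\iota\colon X\hookrightarrow Gr_1$ and its open complement $U$. Since $Gr_1=Gr(2,V)$ has a Schubert cell decomposition, the cycle class maps $A_i(Gr_1)\to H_{2i}(Gr_1)$ are isomorphisms, and the two localization sequences fit into a commutative diagram
\[
\begin{array}{ccccccc}
A_i(X) & \xrightarrow{\iota_*} & A_i(Gr_1) & \longrightarrow & A_i(U) & \longrightarrow & 0 \\
\downarrow & & \downarrow & & \downarrow & & \\
H_{2i}(X) & \xrightarrow{\iota_*} & H_{2i}(Gr_1) & \longrightarrow & H_{2i}(U) & &
\end{array}
\]
in which the middle vertical arrow is an isomorphism. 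The right--hand vertical map will be injective as soon as the images of the two $\iota_*$'s coincide under this identification: for then a class $\alpha\in A_i(U)$ with vanishing cycle class lifts (by surjectivity of the top row) to $\tilde\alpha\in A_i(Gr_1)$ whose image in $H_{2i}(Gr_1)$ lies in $\iota_*H_{2i}(X)$ by exactness, hence already in $\iota_* A_i(X)$ by the image--matching, so that $\alpha=0$ in $A_i(U)$.

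To establish the image--matching, observe that $\dim X=3$ kills both sides for $i\ge 4$. For $0\le i\le 3$, the Calabi--Yau vanishings $h^{0,1}(X)=h^{0,2}(X)=0$ combined with $h^{1,1}(X)=1$ give $b_0(X)=b_2(X)=b_4(X)=b_6(X)=1$, and each space $H_{2i}(X)\cong\QQ$ is generated by the (algebraic) class of the restricted power $(H|_X)^{3-i}\in A_i(X)$, where $H$ denotes the hyperplane class on $Gr_1$; indeed $H|_X$ is nonzero by ampleness and generates $\pic(X)\otimes\QQ=\QQ$. By the projection formula, the images of both $\iota_*$'s in $A_i(Gr_1)\cong H_{2i}(Gr_1)$ coincide with $\QQ\cdot\bigl(H^{3-i}\cdot\iota_*[X]\bigr)$, so they agree.

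The argument for $V\subset Gr_2^\vee$ is identical, since $Gr_2^\vee\cong Gr(2,V^*)$ is again a Grassmannian (hence cellular) and $Y$ shares the Hodge numbers of $X$. No step is genuinely difficult: the whole proof rests on the cellular structure of the ambient Grassmannian together with the very restricted even cohomology of the Calabi--Yau threefold, which forces every even homology class of $X$ to be a pullback of a Schubert class on $Gr_1$.
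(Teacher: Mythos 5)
Your proof is correct, and it takes a genuinely different route from the paper's. Both arguments start from the same commutative ladder of localization sequences with $A_i(Gr_1)\xrightarrow{\cong}H_{2i}(Gr_1)$, and both must show that a homology class on $Gr_1$ supported on $X$ is the class of a cycle supported on $X$. The paper does this in a way that is insensitive to the cohomology of $X$: it invokes semisimplicity of polarized Hodge structures to represent the class by a Hodge class in $H_{2i}(X)$ and then the Hodge conjecture for threefolds to algebraize it. You instead exploit the very constrained topology of a GPK$^3$ threefold: since $h^{0,2}(X)=0$ and $h^{1,1}(X)=1$, every even Betti number equals $1$ and $H_{2i}(X)$ is spanned by the manifestly algebraic class $(H\vert_X)^{3-i}$, whose pushforward is $H^{3-i}\cdot\iota_*[X]$ by the projection formula. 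This is more elementary (no Hodge conjecture, no semisimplicity), at the price of using the specific Hodge numbers of $X$ and $Y$ quoted earlier in the paper, whereas the paper's argument would apply verbatim to any smooth threefold sitting inside a variety with trivial Chow groups. One small imprecision: your claim that the image of $\iota_*$ on \emph{Chow} groups equals $\QQ\cdot(H^{3-i}\cdot\iota_*[X])$ is not a direct consequence of the projection formula alone (a priori $\iota_*A_i(X)$ could be larger inside $A_i(Gr_1)\cong\QQ^2$ for $i=2,3$); it does hold, via injectivity of the cycle map on $Gr_1$, but in any case your argument only needs the containment $\iota_*H_{2i}(X)\subseteq \mathrm{cl}\bigl(\iota_*A_i(X)\bigr)$, which your computation of $H_{2i}(X)$ already gives, so the proof stands.
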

    
   \begin{proof} This is a standard argument. One has a commutative diagram with exact rows
     \[  \begin{array}[c]{cccccc}
     A_i(X)& \to&  A_i(Gr_1)& \to& A_i(U) & \to 0\\
      &&&&&\\
      \downarrow &&\ \ \downarrow{\cong}&& \downarrow{}&\\
      &&&&&\\
      H_{2i}(X)& \to&  H_{2i}(Gr_1)& \to& H_{2i}(U) & \to 0\\
      \end{array}\]
      (the middle vertical arrow is an isomorphism, as $Gr_1$ is a Grassmannian).
      Given $a\in A_i(U)$ homologically trivial, there exists $\bar{a}\in A_i(Gr_1)$ such that the homology class of $\bar{a}$ is supported on $X$. Using semisimplicity of polarized Hodge structures, the homology class of $\bar{a}$ is represented by a Hodge class in $H_{2i}(X)$. But $X$ being three--dimensional, the Hodge conjecture is known for $X$, and so $\bar{a}\in H_{2i}(Gr_1)$ is represented by a cycle $d\in A_i(X)$. The cycle $\bar{a}-d\in A_i(Gr_1)$ thus restricts to $a$ and is homologically trivial, hence rationally trivial.
     \end{proof}

\begin{remark}\label{sing} We observe in passing that the subvarieties $F,G$ in proposition \ref{bcp} must be singular. Indeed, the fibres $F_x, G_y$ have Picard number $1$, but the group of Weil divisors has dimension $2$, and so the fibres $F_x,G_y$ are not $\QQ$--factorial. By generic smoothness \cite[Corollary 10.7]{Ha}, it follows that $F,G$ cannot be smooth.
\end{remark}

\begin{remark} As explained in \cite[Section 5]{OR}, the $51$--dimensional family of GPK$^3$ threefolds degenerates to the $50$--dimensional family of Calabi--Yau threefolds first studied in \cite{K9}, \cite{IIM}. Generalized mirror pairs in this $50$--dimensional family are also derived equivalent and L--equivalent \cite{KR}, and have isomorphic Chow motives \cite{L2}.
\end{remark}

 \section{Higher Chow groups}
 \label{shigh}

 \begin{definition}[Bloch \cite{B2}, \cite{B3}] Let $\Delta^j\cong\A^j(\C)$ denote the standard $j$--simplex. For any quasi--projective variety $M$ and any $i\in\Z$, let $z_i^{simp}(M,\ast)$ denote the simplicial complex where $z_i(X,j)$ is the group of $(i+j)$--dimensional algebraic cycles in $M\times\Delta^j$ that meet the faces properly. Let $z_i^{}(M,\ast)$ denote the single complex associated to $z_i^{simp}(M,\ast)$. The higher Chow groups of $M$ are defined as
    \[ A_i(M,j):= H^j( z_i^{}(M,\ast)\otimes \QQ)\ .\]
 \end{definition}
 
 \begin{remark}\label{G} Clearly one has $A_i(M,0)\cong A_i(M)$. For a closed immersion, there is a long exact sequence of higher Chow groups \cite{B3}, \cite{Lev}, extending the usual ``localization exact sequence'' of Chow groups.
 Higher Chow groups are related to higher algebraic $K$--theory: there are isomorphisms
   \begin{equation}\label{KK} \gr_\gamma^{n-i} G_j(M)_\QQ\cong A_i(M,j)_{}  \ \ \ \hbox{for\ all\ }i,j \ ,\end{equation}
   where $G_j(M)$ is Quillen's higher $K$--theory group associated to the category of coherent sheaves on $M$, and $\gr^\ast_\gamma$ is the graded for the $\gamma$--filtration \cite{B2}. Higher Chow groups are also related to Voevodsky's motivic cohomology (defined as hypercohomology of a certain complex of Zariski sheaves) \cite{Fr}, \cite{MVW}.
  \end{remark}

  \section{Operational Chow cohomology}
  
  In what follows, we will rely on the existence of operational Chow cohomology, as constructed by Fulton--MacPherson. The precise definition does not matter here; we merely use the existence of a theory with good formal properties:
  
 \begin{theorem}[Fulton \cite{F}]\label{oper} There exists a contravariant functor
   \[ A^\ast()\colon\ \ \ \hbox{Var}_\C\ \to\ \hbox{Rings}\ \]
   (from the category of varieties with arbitrary morphisms to that of graded commutative rings),
   with the following properties: 
   
   \begin{itemize}
   
   \item for any $X$, and $b\in A^j(X)$ there is a cap--product 
    \[ b \cap ()\colon\ \ A_i(X)\ \to\ A_{i-j}(X)\ ,\]
    making $A_\ast(X)$ a graded $A^\ast(X)$--module;
    
    \item for $X$ smooth of dimension $n$, the map $A^j(X)\to A_{n-j}(X)$ given by
      \[  b\ \mapsto\ b \cap [X]\ \in A_{n-j}(X) \]
      is an isomorphism for all $j$;
    
   \item for any proper morphism $f\colon X\to Y$, there is a projection formula:
   \[    f_\ast (  f^\ast(b)\cap a)= b\cap f_\ast(a)\ \ \ \hbox{in}\ A_{i-j}(Y)\ \ \ \hbox{for\ any\ } b\in A^j(Y)\ ,\ a\in A_i(X)\ .  \]

   \end{itemize} 
     \end{theorem}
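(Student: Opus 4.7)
The plan is to follow the bivariant formalism of Fulton--MacPherson. For a variety $X$, I would define $A^j(X)$ to be the abelian group of systems $c = \{c_g\}$ indexed by morphisms $g\colon Y \to X$, where each $c_g\colon A_i(Y) \to A_{i-j}(Y)$ is a homomorphism for every $i$, subject to three compatibility axioms: compatibility with proper pushforward along morphisms over $X$, with flat pullback, and with refined Gysin maps attached to regular embeddings. Addition is componentwise, and the graded ring structure is composition of operations, $(c \cdot c')_g := c_g \circ c'_g$; one checks componentwise that the three axioms are preserved under composition. Contravariance is automatic: for $f\colon X' \to X$ set $(f^\ast c)_g := c_{f \circ g}$, and the three axioms transport formally.

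The cap product is then simply $b \cap a := b_{\mathrm{id}_X}(a)$, which by associativity of composition makes $A_\ast(X)$ into a graded $A^\ast(X)$-module. The projection formula for a proper $f\colon X \to Y$ is essentially the content of the proper-pushforward axiom applied to $f$ itself: by definition of $f^\ast$, the pushforward axiom reads $f_\ast \circ (f^\ast b)_{\mathrm{id}_X} = b_{\mathrm{id}_Y} \circ f_\ast$, which is exactly the desired identity once one unwinds the cap product. Thus the projection formula is built into the definition and requires no further argument.

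The substantive step is Poincar\'e duality for $X$ smooth of dimension $n$: the map $A^j(X) \to A_{n-j}(X)$ given by $b \mapsto b \cap [X]$ should be an isomorphism. The inverse sends $\alpha \in A_{n-j}(X)$ to the operation $c_g(a) := \gamma_g^!(a \times \alpha)$, where $\gamma_g\colon Y \to Y \times X$ is the graph of $g$ (a regular embedding because $X$ is smooth) and $\gamma_g^!$ is the refined Gysin homomorphism. The main obstacle is to verify that this candidate satisfies all three bivariant axioms simultaneously, which rests on the compatibility of refined Gysin maps with proper pushforward, flat pullback, and composition of regular embeddings---the core content of Fulton's intersection theory. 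Once that is in place, the fact that $b \cap [X]$ recovers $b$ from any $b \in A^j(X)$ uses that every $g\colon Y \to X$ factors as $\gamma_g$ followed by the smooth projection $Y \times X \to X$, so that an operational class on a smooth target is determined by its action on the fundamental class. Graded commutativity of the product follows from commutativity of refined intersection in transverse squares.
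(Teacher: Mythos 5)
Your proposal is correct and is essentially the same approach as the paper's: the paper proves this theorem simply by citing Fulton's operational/bivariant construction (\cite[Chapter 17]{F}, with the projection formula from Section 17.3), and your sketch --- operational classes as systems of compatible operations, ring structure by composition, cap product via the identity morphism, the projection formula as an instance of the proper-pushforward axiom, and Poincar\'e duality on a smooth $X$ via the graph embedding and refined Gysin maps --- is precisely that construction. The substantive verifications you defer (compatibility of refined Gysin maps with pushforward, pullback and composition, and commutativity) are exactly the parts of Fulton's intersection theory the citation covers.
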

  
  \begin{proof} This is contained in \cite[Chapter 17]{F}.
    The projection formula is \cite[Section 17.3]{F}.
  \end{proof}

  \begin{remark} For quasi--projective varieties, there is another cohomology theory to pair with Chow groups: the assignment
    \[ CH^j(X):=\varinjlim A^j(Y)\ ,\]
    where the limit is over all smooth quasi--projective varieties $Y$ with a morphism to $X$. As shown in \cite{BFM}, \cite[Example 8.3.13]{F}, this theory satisfies the formal properties of theorem \ref{oper}. Since in this note, we are only interested in quasi--projective varieties, we might as well work with this theory rather than operational Chow cohomology.
 \end{remark}

 \section{Piecewise trivial fibrations}
 \label{sfib}
 
 This section contains two auxiliary results, propositions \ref{p2} and \ref{p1}. The first is about Chow groups of the open complement $R:=Q\setminus F$ of proposition \ref{bcp}; the second concerns the Chow groups of the singular variety $F$. 
   
 \begin{definition}[Section 4.2 in \cite{Seb}]\label{pt} Let $p\colon M\to N$ be a projective surjective morphism between quasi--projective varieties. We say that $p$ is a {\em piecewise trivial fibration with fibre $F$\/}
 if there is a finite partition $N=\cup_j N_j$, where $N_j\subset N$ is locally closed and there is an isomorphism of $N_j$--schemes $p^{-1}(N_j)\cong N_j\times F$ for all $j$.
  \end{definition}

 \begin{proposition}\label{p2} Let $U:=Gr_1\setminus X$ and  $R:= Q\setminus F$ and $p_U\colon R\to U$ be as in proposition \ref{bcp}. 
 
 \noindent
 (\rom1) Let $h\in A^1(R)$ be a hyperplane section, and let $h^j\colon A_i(R)\to A_{i-j}(R)$ denote the map induced by intersecting with $h^j$. There are isomorphisms
   \[ \begin{split} &\Phi_0\colon   \ \ \    A_{0}(U)\ \xrightarrow{\cong}\ A_0(R)\ ,\\ 
                       &\Phi_1 \colon\ \ \  A_1(U)\oplus A_{0}(U)\ \xrightarrow{\cong}\ A_1(R)\ ,\\ 
                        & \Phi_2\colon \ \ \    A_2(U)\oplus A_1(U)  \oplus   A_0(U)^{\oplus 2}    \ \xrightarrow{\cong}\ A_2(R)\  \ ,\\
                          &\Phi_3\colon \ \ \  A_3(U)^{}
                             \oplus A_2(U)^{} \oplus A_1(U)^{\oplus 2} \oplus A_0(U)^{\oplus 2} \ \xrightarrow{\cong}\ A_3(R)\  ,\\
                          &\Phi_4\colon \ \ \ A_3(U)\oplus A_2(U)^{\oplus 2}\oplus A_1(U)^{\oplus 2}\oplus A_0(U)\ \xrightarrow{\cong}\ A_4(R)\ ,\\
                           &\Phi_5\colon \ \ \ A_3(U)^{\oplus 2}\oplus A_2(U)^{\oplus 2}\oplus A_1(U)^{}\oplus A_0(U)\ \xrightarrow{\cong}\ A_5(R)\ .\\
                                                      \end{split}\]
              The maps are defined as
            \[  \begin{split}  
                 &\Phi_0:= h^5\circ (p_U)^\ast\ ,\\
                        &\Phi_1 := \Bigl(  h^5\circ (p_U)^\ast, h^4\circ (p_U)^\ast\Bigr) \ ,\\       
                            & \Phi_2:=\Bigl(   h^5\circ (p_U)^\ast, h^4\circ (p_U)^\ast, h^3\circ (p_U)^\ast, (b\cdot h)\circ (p_U)^\ast    \Bigr)\ ,\\
                                      & \Phi_3:=\Bigl(    h^5\circ (p_U)^\ast, h^4\circ (p_U)^\ast, h^3\circ (p_U)^\ast, (b\cdot h)\circ (p_U)^\ast,  h^2\circ (p_U)^\ast, 
                                      b\circ (p_U)^\ast           \Bigr)\ ,\\
                                       & \Phi_4:=\Bigl(  h^4\circ (p_U)^\ast, h^3\circ (p_U)^\ast, (b\cdot h)\circ (p_U)^\ast,  h^2\circ (p_U)^\ast, 
                                      b\circ (p_U)^\ast , h\circ (p_U)^\ast          \Bigr)\ ,\\    
                                       & \Phi_5:=\Bigl(   h^3\circ (p_U)^\ast, (b\cdot h)\circ (p_U)^\ast,  h^2\circ (p_U)^\ast, 
                                      b\circ (p_U)^\ast   , h\circ (p_U)^\ast, (p_U)^\ast        \Bigr)\ ,\\                                                                        
                                     \end{split} \]
                                     where $b\in A^2(R)$ is a class made explicit in the proof (and $b\colon A_i(R)\to A_{i-2}(R)$ denotes the operation of intersecting with $b$, and similarly for $b\cdot h$).
                               
  \noindent
 (\rom2) 
   \[ A_i^{hom}(R)=0\ \ \ \forall i\ .\]
  \end{proposition}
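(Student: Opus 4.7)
My plan is to treat Proposition \ref{p2} as a relative Leray--Hirsch statement for the piecewise trivial fibration $p_U\colon R\to U$, whose fibre $Q_u$ admits an affine cellular decomposition (combining the cells of the closed quadric $Z$ with the $\A^2$-bundle structure of the open stratum $C\to\PP^3$ from the proof of Proposition \ref{bcp}). First I would identify the class $b\in A^2(R)$: each fibre $Q_u=Gr_2^\vee\cap H_u$ is a hyperplane section of the Grassmannian, so $A^2(Q_u)=\QQ^2$ is spanned by the restrictions of the two Schubert classes $\sigma_2,\sigma_{1,1}\in A^2(Gr_2^\vee)$, and using the relation $\sigma_2+\sigma_{1,1}=h^2$ I would take $b:=(q|_R)^\ast\sigma_{1,1}$, with $q\colon Q\to Gr_2^\vee$ the second projection. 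A direct check using the cell basis shows that the eight classes $1,h,h^2,b,h^3,b\cdot h,h^4,h^5$ restrict to a $\QQ$-basis of $A^\ast(Q_u)$ for every $u\in U$; the operators appearing in each $\Phi_i$ are precisely those of the correct dimension shift to land in $A_i(R)$.

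For part (\rom1), I would argue by induction on the number of strata in a partition $U=\bigsqcup_j U_j$ witnessing the piecewise triviality of $p_U$. For the base case of a single trivial piece $p^{-1}(U_j)\cong U_j\times Q_u$, the affine cellular decomposition of $Q_u$, combined with the $\A^1$-homotopy invariance of higher Chow groups, yields a K\"unneth-type isomorphism
\[ A_i(U_j\times Q_u,n)\;\cong\;\bigoplus_{k+l=i} A_k(U_j,n)\otimes_\QQ A_l(Q_u) \]
for every $i$ and every $n\geq 0$, realised by intersection with the global classes above. For the inductive step, given an open substratum $U_0\subset U$ with closed complement $U'$ and $R_0:=p^{-1}(U_0)$, $R':=p^{-1}(U')$, the higher Chow localization long exact sequences for the pairs $(R,R')$ and $(U,U')$ are compatible with the correspondences defining $\Phi$. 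Assuming by induction that $\Phi$ is an isomorphism for $R_0\to U_0$ and for $R'\to U'$, both at the level of $A_\ast(\cdot,0)$ and of $A_\ast(\cdot,1)$, the five-lemma delivers the isomorphism for $R\to U$.

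Part (\rom2) follows immediately: Lemma \ref{Utriv} shows that $A_k(U)\to H_{2k}(U)$ is injective for every $k$, and the direct-sum decomposition of part (\rom1) is compatible, via the algebraic classes $h$ and $b$, with the analogous Leray--Hirsch-type decomposition of $H_{2i}(R)$ obtained by the same inductive argument on the Borel--Moore side; injectivity on each summand then forces $A_i^{hom}(R)=0$. The main technical point is establishing the K\"unneth-type isomorphism at the level of higher Chow groups, which rests on iterated use of the higher Chow localization sequence along the cell decomposition of $Q_u$ and on $\A^1$-homotopy invariance; once this is in place, the rest of the argument reduces to a formal diagram chase.
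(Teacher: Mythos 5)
Your proposal takes essentially the same route as the paper: you upgrade the statement to higher Chow groups, induct over the strata of the piecewise trivialization via the localization long exact sequence and the five-lemma (with the base case handled by the cellular/linear structure of the fibre $Q_u$ and homotopy invariance), and deduce (\rom2) from the Borel--Moore homology analogue of (\rom1) together with lemma \ref{Utriv}; moreover your class $b=(q\vert_R)^\ast\sigma_{1,1}$ coincides with the paper's $c_2$ of the tautological bundle. The only point the paper treats with visibly more care is the compatibility of the operators $h^j$ and $b$ with the localization sequences (via Bloch's moving lemma and the projection formula for higher Chow groups, after refining the stratification so the strata are smooth), which your outline assumes implicitly, but this is an implementation detail rather than a different argument.
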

 
 \begin{proof} 
 
 \noindent
 (\rom1)
 As we have seen in proposition \ref{bcp}, the morphism $p_U\colon R\to U$ is a piecewise trivial fibration, with fibre $R_u$. Let $T$ denote the tautological bundle on the Grassmannian $Gr_2$, and define
  \[  b:=   \bigl( Gr_1\times  c_2(T)\bigr)\vert_R\ \ \ \in\ A^2(R)\ .\]
  The ($5$--dimensional) fibres $R_u$ of the fibration $p_U\colon R\to U$ verify
   \[ A^i(R_u)=  \begin{cases}     \QQ\cdot h^i\vert_{R_u}  &\ \hbox{if}\ i=0,1,4,5\ ,\\
                                       \QQ\cdot h^2\vert_{R_u} \oplus \QQ\cdot b\vert_{R_u} &\ \hbox{if}\ i=2\ ,\\
                             \QQ\cdot h^3\vert_{R_u} \oplus \QQ\cdot (b\cdot h)\vert_{R_u} &\ \hbox{if}\ i=3\ .\\
                         \end{cases}\]

To prove the isomorphisms of Chow groups of (\rom1), it is more convenient to prove a more general statement for {\em higher Chow groups\/}. That is, we consider
maps
  \[  \Phi_i^j\colon\ \ \ \bigoplus A_{i-k}^{}(U,j)\ \xrightarrow{}\ A_i^{}(R,j)\ \]
  such that $\Phi_i^0=\Phi_i$ (the maps $\Phi_i^j$ are defined just as the $\Phi_i$, using $(p_U)^\ast$ and intersecting with $h$ and $b$). We now claim that the maps $\Phi_i^j$ are isomorphisms for all $i=0,\ldots,5$ and all $j$. The $j=0$ case of this claim proves (\rom1).
  
 To prove the claim, we exploit the piecewise triviality of the fibration $p_U$. 
 Up to subdividing some more, we may suppose the strata $U_k$ (and hence also the strata $R_k$) are {\em smooth\/}.
 We will use the notation
   \[ U_{\le s}:= \bigcup_{i\le s} U_i\ ,\ \ \ R_{\le s}:=\bigcup_{i\le k} R_i\ .\]
  For any $s$, the morphism
   $p_{U_{\le s}}\colon R_{\le s}\to U_{\le s}$ is a piecewise trivial fibration (with fibre $R_u$). For $s$ large enough, $R_{\le s}=R$. Since $U$ and $R$ are smooth, we may suppose the $U_s$ are ordered in such a way that the $U_{\le s}$ (and hence the $R_{\le s}$) are {\em smooth\/}.
   
     The morphism $p_U$ is flat of relative dimension $5$, and so there is a commutative diagram of complexes (where rows are exact triangles)
      \[ \begin{array}[c]{cccccc}
            z_{i+5}(R_{\le s-1},\ast) &\to& z_{i+5}(R_{\le s},\ast) &\to& z_{i+5}(R_s,\ast) &\to\\
            &&&&&\\
            \uparrow{\scriptstyle (p_{U_{\le s-1}})^\ast}&& \uparrow{\scriptstyle p_{U_{\le s}}^\ast} &&       \uparrow{\scriptstyle (p_{U_s})^\ast}  \\
            &&&&&\\
              z_{i}(U_{\le s-1},\ast) &\to& z_{i}(U_{\le s},\ast) &\to& z_{i}(U_s,\ast) &\to\\
              \end{array}\]
        Also, given a codimension $\ell$ subvariety $M\subset R$, let $z_i^M(R_{\le k},\ast)\subset z_i(R_{\le k},\ast)$ denote the subcomplex formed by cycles in general position with respect to $M$. The inclusion $z_i^M(R_{\le k},\ast)\subset z_i(R_{\le k},\ast)$ is a quasi--isomorphism \cite[Lemma 4.2]{B2}.
          The projection formula for higher Chow groups \cite[Exercice 5.8(\rom1)]{B2} gives a commutative diagram up to homotopy
            \[ \begin{array}[c]{ccccc}
                      z_{i+5-\ell}(R_{\le s-1},\ast) & \xrightarrow{} & z_{i+5-\ell}(R_{\le s},\ast) & \to &  z_{i+5-\ell}(R_s,\ast) \to   \\
                      &&&&\\
                      \uparrow{\scriptstyle \cdot{M\vert_{R_{\le s-1}}}}&&\uparrow{\scriptstyle \cdot M\vert_{R_{\le s}}} &&\uparrow{\scriptstyle \cdot M\vert_{R_s}}  \\
                      &&&&\\
                       z_{i+5}^{{M}}(R_{\le s-1},\ast) & \xrightarrow{} & \ \ z_{i+5}^M(R_{\le s},\ast)  & \to&   z_{i+5}^M(R_s,\ast) \to  \ .\\   
                     \end{array}\]          
        In particular, these diagrams exist for $M$ being (a representative of) the classes $h^r, b\in A^\ast(R)$ that make up the definition of the map $\Phi^j_i$.               
  The result of the above remarks is a commutative diagram with long exact rows  
     \[ \begin{array}[c]{ccccc}
                     \to  A_{i}(R_{s},j+1)& \xrightarrow{}&\ A_{i}(R_{\le s-1},j )& \xrightarrow{}& A_{i}(R_{\le s},j)\ \to   \\
    &&&&\\
   \ \ \ \ \ \ \ \  \uparrow {\scriptstyle \Phi_i^{j+1}\vert_{R_s}}&&\ \ \ \ \ \ \  \  \uparrow{\scriptstyle \Phi_i^j\vert_{R_{\le s-1}}} &&\ \ \ \uparrow
    {\scriptstyle \Phi_i^j\vert_{R_{\le s}}}  \\
    &&&&\\
        \to  \bigoplus  A_{i-k}^{}(U_s,j+1) &\to &   \bigoplus A_{i-k}^{}(U_{\le s-1},j)     &\xrightarrow{} & \  \bigoplus A_{i-k}(U_{\le s},j)\ \to\ .\\
       \end{array}\]    
   Applying noetherian induction and the five--lemma, one is reduced to proving the claim for $R_s\to U_s$. But $R_s$ is isomorphic to the product $U_s\times R_u$ and the fibre $R_u$ is a linear variety (i.e., $R_u$ can be written as a finite disjoint union of affine spaces $\A^r$). Cutting up the fibre $R_u$ and using another commutative diagram with long exact rows, one is reduced to proving that $A_i(U_s,j)\cong A_{i+r}(U_s\times\A^r,j)$, which is the homotopy property for higher Chow groups \cite[Theorem 2.1]{B2}. This proves the claim, and hence (\rom1). 
        
\vskip0.3cm
\noindent
(\rom2) 
%
%
%
The point is that there is also a version in homology of (\rom1). That is, for any $j\in\NN$ there are isomorphisms
  \begin{equation}\label{isohomr}
      \Phi^h_j\colon\ \ \ \bigoplus H_{j-2k}(U)\ \xrightarrow{\cong}\ H_j(R)\ ,\end{equation}
      where $\Phi^h_j$ is defined as
      \[ \Phi^h_j:= \Bigl(    h^5\circ (p_U)^\ast, h^4\circ (p_U)^\ast, h^3\circ (p_U)^\ast, (b\cdot h)\circ (p_U)^\ast,  h^2\circ (p_U)^\ast, 
                                      b\circ (p_U)^\ast , h\circ (p_U)^\ast, (p_U)^\ast          \Bigr)\ .     
         \]
This is proven just as (\rom1), using homology instead of higher Chow groups. Cycle class maps fit into a commutative diagram
  \[ \begin{array}[c]{ccc}
      \bigoplus A_{i-k}(U)& \xrightarrow{\Phi_i}& A_i(R)\\
      &&\\
      \downarrow&&\downarrow\\
      &&\\
      \bigoplus H_{2i-2k}(U)& \xrightarrow{\Phi^h_{2i}}&\ \  H_{2i}(R)\ .\\
      \end{array}\]
As the horizontal arrows are isomorphisms, and the left vertical arrow is injective (lemma \ref{Utriv}),
the right vertical arrow is injective as well.
This proves statement (\rom2).
   \end{proof} 
  
  For later use, we record the following result:
  
  \begin{corollary}\label{noodd} One has
   \[ H_j(R)=0\ \ \ \hbox{for\ $j$\ odd}\ .\]
    \end{corollary}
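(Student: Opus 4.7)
The plan is to combine the isomorphism
\[ \Phi^h_j\colon \bigoplus H_{j-2k}(U)\xrightarrow{\cong} H_j(R) \]
established during the proof of proposition \ref{p2}(\rom2) with the localization sequence for the pair $(Gr_1,X)$. Since every summand on the left--hand side has index of the same parity as $j$, the corollary reduces at once to the vanishing statement $H_\ell(U)=0$ for every odd $\ell$.

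To verify this vanishing I would write down the long exact sequence in Borel--Moore homology for the closed immersion $\iota\colon X\hookrightarrow Gr_1$ with open complement $U$,
\[ \cdots\to H_\ell(X)\to H_\ell(Gr_1)\to H_\ell(U)\to H_{\ell-1}(X)\xrightarrow{\iota_\ast} H_{\ell-1}(Gr_1)\to\cdots. \]
Because $Gr_1\cong Gr(2,5)$ admits a Schubert cell decomposition, $H_\ell(Gr_1)=0$ for every odd $\ell$, so $H_\ell(U)$ embeds into the kernel of the pushforward $\iota_\ast\colon H_{\ell-1}(X)\to H_{\ell-1}(Gr_1)$ in the even degree $\ell-1$. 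For $\ell-1>6$ the source is already zero since $\dim_{\RR}X=6$, and so only the four cases $\ell-1\in\{0,2,4,6\}$ require actual work.

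The heart of the argument, which I expect to be the only (modest) obstacle, is therefore injectivity of $\iota_\ast\colon H_{2m}(X)\to H_{2m}(Gr_1)$ for $m\in\{0,1,2,3\}$. Since $X$ is a Calabi--Yau threefold with $h^{1,1}(X)=1$ and all other even Hodge numbers off the diagonal equal to zero, Poincar\'e duality identifies $H_{2m}(X)$ with a one--dimensional $\QQ$--vector space generated by $h^{3-m}|_X$, where $h$ is the hyperplane class inherited from $\PP^9$. The projection formula then gives
\[ \iota_\ast(h^{3-m}|_X)=h^{3-m}\cdot [X] \ \ \ \text{in }H_{2m}(Gr_1), \]
and this class is nonzero because further intersection with $h^m$ returns $h^3\cdot[X]=\deg(X)\neq 0$ in $H_0(Gr_1)=\QQ$. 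Combining these observations yields $H_\ell(U)=0$ for every odd $\ell$, and so $\Phi^h_j$ delivers $H_j(R)=0$ for every odd $j$.
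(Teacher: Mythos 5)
Your proof is correct and follows essentially the same route as the paper: the isomorphism (\ref{isohomr}) reduces the claim to the vanishing of the odd homology of $U$, which is then read off from the localization sequence for $X\subset Gr_1$. The only difference is that you spell out the injectivity of $\iota_\ast\colon H_{2m}(X)\to H_{2m}(Gr_1)$ via the degree argument, a step the paper leaves implicit.
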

  
  \begin{proof} The threefold $X$ has $H_{j-1}(X)=\QQ$ for any $j-1$ even, and the Grassmannian $Gr_1$ has $H_j(Gr_1)=0$ for $j$ odd. The exact sequence
    \[   H_j(Gr_1)\ \to\ H_j(U)\ \to H_{j-1}(X)\ \to\ H_{j-1}(Gr_1)\ \to\ \]
    implies that the open $U:=Gr_1\setminus X$ has no odd--degree cohomology. In view of the isomorphism (\ref{isohomr}), $R$ has no odd--degree homology either.
   \end{proof}

 Let us now turn to the fibration $F\to X$, where $F$ (but not $X$) is singular.

   \begin{proposition}\label{p1} Let $p_X\colon F\to X$ be as in proposition \ref{bcp}. Let $h^k\colon A_i(F)\to A_{i-k}(F)$ denote the operation of intersecting with a hyperplane section. 
  
  \noindent
  (\rom1) There are isomorphisms
   \[ \begin{split} &\Phi_0\colon   \ \ \    A_{0}(X)\ \xrightarrow{\cong}\ A_0(F)\ ,\\ 
                       &\Phi_1 \colon\ \ \  A_1(X)\oplus A_{0}(X)\ \xrightarrow{\cong}\ A_1(F)\ ,\\ 
                        & \Phi_2\colon \ \ \    A_2(X)\oplus A_1(X)  \oplus   A_0(X)^{\oplus 2}    \ \xrightarrow{\cong}\ A_2(F)\  \ ,\\
                          &\Phi_3\colon \ \ \  A_3(X)^{}
                             \oplus A_2(X)^{}\oplus A_1(X)^{\oplus 2}\oplus A_0(X)^{\oplus 2}  \ \xrightarrow{\cong}\ A_3(F)\  ,\\
                          &\Phi_4\colon \ \ \ A_3(X)\oplus A_2(X)^{\oplus 2}\oplus A_1(X)^{\oplus 2}\oplus A_0(X)^{\oplus 2}\ \xrightarrow{\cong}\ A_4(F)\ .\\
                                                      \end{split}\]
              The maps $\Phi_j$ are defined as
            \[  \begin{split}   &\Phi_0:= h^5\circ (p_X)^\ast        \ ,\\    
                            &\Phi_1 := \Bigl(  h^5\circ (p_X)^\ast, h^4\circ (p_X)^\ast\Bigr)\ ,\\
                                         & \Phi_2:=\Bigl(   h^5\circ (p_X)^\ast, h^4\circ (p_X)^\ast, h^3\circ (p_X)^\ast, (b\cdot h)\circ (p_X)^\ast    \Bigr)\ ,\\
                                      & \Phi_3:=\Bigl(    h^5\circ (p_X)^\ast, h^4\circ (p_X)^\ast, h^3\circ (p_X)^\ast, (b\cdot h)\circ (p_X)^\ast,  h^2\circ (p_X)^\ast, 
                                      b\circ (p_X)^\ast           \Bigr)\ ,\\
                                       & \Phi_4:=\Bigl(  h^4\circ (p_X)^\ast, h^3\circ (p_X)^\ast, (b\cdot h)\circ (p_X)^\ast,  h^2\circ (p_X)^\ast, 
                                      b\circ (p_X)^\ast , h\circ (p_X)^\ast  ,      (p_X)^\ast(-)\cap e      \Bigr)\ ,\\    
                                     \end{split} \]
                                     where $b\in A^2(F)$ is a class made explicit in the proof (and $b\colon A_i(R)\to A_{i-2}(R)$ denotes the operation of intersecting with $b$, and similarly for $b\cdot h$), and $e\in A_7(F)$ is a class made explicit in the proof (and the last $(p_X)^\ast(-)$ means pullback of operational Chow cohomology).

   \noindent
   (\rom2) The maps $\Phi_i$ induce isomorphisms of homologically trivial cycles
   \[   \Phi_i\colon\ \ \ \bigoplus A_{i-k}^{hom}(X)\ \xrightarrow{\cong}\ A_i^{hom}(F)\ .\]

    \end{proposition}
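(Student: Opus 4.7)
The plan is to mirror the proof of proposition \ref{p2}, introducing one new ingredient to handle the singularity of $F$ (cf.\ remark \ref{sing}). The auxiliary class $b \in A^2(F)$ is defined exactly as in proposition \ref{p2}, as the restriction of $c_2(T)$ for a tautological bundle on $Gr(2,V)$, so that fibrewise $\{h^2\vert_{F_x}, b\vert_{F_x}\}$ and $\{h^3\vert_{F_x}, (b\cdot h)\vert_{F_x}\}$ generate $A^2(F_x)$ and $A^3(F_x)$ respectively. The genuinely new ingredient is the Weil divisor class $e \in A_7(F)$. Since $F_x$ has Picard number one but Weil divisor group of rank two, the ``missing'' generator of $A_4(F_x)\cong\QQ^2$ cannot be captured through operational Chow cohomology pulled back from $X$; instead one takes $e$ to be an explicit global Weil divisor on $F$ (arising from a uniform family of Schubert-type cycles on the hyperplane sections of $Gr(2,V)$ described in \cite[Section 7]{BCP}) whose restriction $e\vert_{F_x}$, together with $h\vert_{F_x}$, spans $A_4(F_x)$.

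With these classes in hand, (\rom1) is proved by upgrading to higher Chow groups: one defines
\[\Phi_i^j\colon\ \bigoplus A_{i-k}(X,j)\ \to\ A_i(F,j)\]
by the same recipe as $\Phi_i$ (the cap product $(p_X)^*(-)\cap e$ being extended to higher Chow groups via the projection formula of \cite[Exercice 5.8]{B2}), and claims that these are isomorphisms for all $i,j$. The argument then proceeds verbatim as in proposition \ref{p2}(\rom1): choose a partition $X = \bigcup X_s$ into smooth locally closed strata on which $F\vert_{X_s}\cong X_s\times F_x$, order the strata so each $X_{\le s}$ is smooth, use the localization long exact sequence for higher Chow groups together with the projection formula to obtain a commutative diagram with long exact rows, and apply the five-lemma and noetherian induction to reduce to the product case $X_s\times F_x$. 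For the latter one cuts up $F_x = Z \cup C$ with $Z\cong\PP^2$ and $C$ Zariski locally trivial over $\PP^2$ with linear fibres (from the proof of proposition \ref{bcp}), and applies the homotopy property for higher Chow groups \cite[Theorem 2.1]{B2}.

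Part (\rom2) follows by the same stratification argument applied to Borel--Moore homology in place of higher Chow groups. As shown in the proof of proposition \ref{bcp}, $H_\ast(F_x)$ is concentrated in even degrees and the cycle class map $A_i(F_x)\to H_{2i}(F_x)$ is an isomorphism, so the parallel argument yields isomorphisms
\[\Phi_i^h\colon\ \bigoplus H_{2(i-k)}(X)\ \xrightarrow{\cong}\ H_{2i}(F)\ .\]
A commutative square with horizontal arrows $\Phi_i$ and $\Phi_i^h$ and vertical arrows the cycle class maps then yields the statement: since both horizontal arrows are isomorphisms, $\Phi_i$ restricts to a bijection between $\bigoplus A_{i-k}^{hom}(X)$ and $A_i^{hom}(F)$.

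The main obstacle will be the construction of the class $e \in A_7(F)$ and the verification that the operation $(p_X)^*(-)\cap e$ is compatible with the stratification and with the localization long exact sequences on higher Chow groups. Because $F$ is singular, one cannot replace $e$ by an operational class; one must work directly with a Weil divisor representative and rely entirely on the Fulton--MacPherson framework of theorem \ref{oper} for the cap product. Once this ingredient is in place, the remainder is a mechanical adaptation of the argument already carried out for $R$ in proposition \ref{p2}.
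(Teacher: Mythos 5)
Your plan for part (\rom1) is to rerun the higher Chow group argument of proposition \ref{p2} verbatim over the base $X$, intersecting with $h^k$, $b$ and capping with $e$ at the level of the complexes $z_\ast(-,\ast)$ for $F$ and its strata. This is exactly the route the paper considers and rejects, and the obstruction is genuine: $F$ (and likewise its fibres $F_x$ and the closed strata $F_{\ge 1}$, hence the products $X_s\times F_x$) is \emph{singular} (remark \ref{sing}), so there is no moving lemma allowing you to intersect cycles on $F$ with representatives of $h^k$, $b$, or to cap with the Weil divisor class $e$; Bloch's projection formula \cite[Exercice 5.8]{B2}, which you invoke, presupposes that such intersections can be defined by putting cycles in general position, and theorem \ref{oper} only gives an action of operational Chow cohomology on ordinary Chow groups, not on higher Chow groups of a singular variety. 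You yourself flag this as ``the main obstacle'' but do not resolve it, and since the whole localization/five-lemma mechanism of your argument needs the maps $\Phi_i^j$ on higher Chow groups of the singular strata, the proof collapses at its central step. (A secondary gap: you assert, but do not verify, that your Weil divisor $e$ restricts fibrewise so that $h\vert_{F_x}, e\vert_{F_x}$ generate $A_4(F_x)$; the paper does this by an explicit construction $e=(p_X\vert_{F^0})^{-1}(\WWW')$ and an explicit curve argument showing $e$ is not proportional to $h$.)

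The paper's actual proof avoids higher Chow groups for $F$ altogether. Surjectivity of $\Phi_i$ uses only the right-exact ordinary localization sequence for $F_{\ge 1}\subset F\supset F_0$ together with Totaro's K\"unneth theorem for linear varieties \cite[Proposition 1]{Tot}, which gives $\bigoplus A_k(X_0)\otimes A_\ell(F_x)\cong A_\ast(X_0\times F_x)$ for the open stratum $F_0\cong X_0\times F_x$, plus noetherian induction. Injectivity is then obtained by a different idea entirely: one constructs an explicit map $\Psi_i$ in the other direction out of the pushforwards $(p_X)_\ast(h^k\cap -)$, normalized so that $\Psi_i\vert_{F_0}\circ\Phi_i\vert_{F_0}=\mathrm{id}$, and proves by noetherian induction (claim \ref{pol}) that there is a polynomial $p_i$ with $a=p_i(\Psi_i\circ\Phi_i)(a)$ for all $a$, which forces $\Phi_i$ to be injective. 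If you want to salvage your write-up, you must either replace the higher Chow group step by such an argument on ordinary Chow groups, or supply a construction of the cap product of operational classes (and of $e$) on higher Chow groups of singular varieties, which is not available in the references you cite. Your part (\rom2) (homology version plus the commutative square of cycle class maps) matches the paper and is fine once (\rom1) is established.
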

 
 \begin{proof} 
 
 \noindent
 (\rom1) The element $b$ is defined just as in proposition \ref{p2}:
   \[ b:=     (\iota_F)^\ast\Bigl( \bigl( Gr_1\times  c_2(T)\bigr)\vert_Q\Bigr)\ \ \ \in\ A^2(F)\ ,\]
   where $\iota_F\colon  F\hookrightarrow Q$ denotes the inclusion morphism, and $A^\ast(F)$ is operational Chow cohomology of the singular variety $F$.
   
   To define the element $e\in A_7(F)$, we return to the description of the fibres of $p_X\colon F\to X$ given in \cite[Section 7]{BCP}. By definition of the variety $F$, we have
    \[ F=\bigl\{ (x,y) \in Gr_1\times Gr_2^\vee\ \vert\ x\in X\ ,\ (x,y)\in \sigma\bigr\}\ ,\]
    where $\sigma\subset \PP\times\PP^\vee$ is the incidence divisor. As in \cite[section 7]{BCP}, given $x\in\PP$ (or $y\in\PP^\vee$) let  
   $ x_i:=\phi_i^{-1}(x)\in\PP(\wedge^2 V)$ (resp. $y_i:= \phi_i^\vee(y)\in\PP(\wedge^2 V^\vee)$). 
   For a point $\omega\in\PP(\wedge^2 V)$ (or in $\PP(\wedge^2 V^\vee)$), let $rk(\omega)$ denote the rank of $\omega$ considered as a skew--form; the rank of $\omega$ is either $2$ or $4$.
   As
      explained in loc. cit., the expression for $F$ can be rewritten as
    \[ \begin{split}  F&=\bigl\{  (x,y)\in \PP\times\PP^\vee\ \vert\ rk(x_i)=2 ,\ rk(y_2)=2 ,\ y\in\ H_{x_2}\bigr\}\ \\
                  &= \bigl\{ (x,y) \in \ \PP\times\PP^\vee\ \vert\     rk(x_i)=2 ,\ rk(y_2)=2 ,\  A_y\cap\ker(x_2)\not=\emptyset\bigr\}\   \subset\ \PP(\wedge^2 V)\times \PP(\wedge^2 V^\vee) .  \\
                  \end{split} \]
(Here, $A_y\subset V^\vee$ denotes the $2$--dimensional subspace corresponding to $y$, and $\ker(x_2)\subset V^\vee$ denotes the kernel of the skew--form $x_2\in\PP(\wedge^2 V)$.)
   The stratification of the fibres $F_x$ as given in loc. cit. can be done relatively over $X$. That is, we define
   \[ \ZZZ:= \bigl\{  (x,y)\ \in \PP\times\PP^\vee\ \vert\ (x,y)\in F\ ,\ A_y\subset\ker(x_2)\bigr\}\ \ \ \subset\ F\ .\]
   The intersection of $\ZZZ$ with a fibre $F_x$ is the variety $Z\cong\PP^2$ of \cite[Lemma 7.2]{BCP}, and so $\ZZZ\to X$ is a $\PP^2$--fibration.
   
  The complement $F^0:=F\setminus\ZZZ$ can be described as   
  \[ F^0=  \bigl\{  (x,y) \in \ \PP\times\PP^\vee\ \vert\     rk(x_i)=2 ,\ rk(y_2)=2 ,\  \dim (A_y\cap\ker(x_2))=1\bigr\}   \ .\]
   The natural morphism $ F^0\to X$ factors as 
   \[ F^0\ \to\ \WWW\ \to\ X\ ,\]
   where
   \[ \WWW:= \bigl\{ (x,s) \in X\times\PP(V^\vee)\ \vert\ s\in\PP(\ker(x_2))\bigr\}\ ,\]
   and $\WWW\to X$ is a $\PP^2$--fibration. As explained in loc. cit., over each $x\in X$ the morphism from $(F^0)_x$ to $\WWW_x$ is a fibration with fibres isomorphic to $\PP^3\setminus\PP^1$.
   Let $\WWW^\prime\subset\WWW$ be the divisor
   \[ \WWW^\prime:= \bigl\{ (x,s) \in X\times\PP(V^\vee)\ \vert\ s\in\PP(\ker(x_2))\cap h\bigr\}\ ,\]   
   where $h\subset\PP(V^\vee)$ is a hyperplane section. The morphism $\WWW^\prime\to X$ is a $\PP^1$--fibration. The class $e\in A_7(F)$ is now defined as
   \[ e:= (p_X\vert_{F^0})^{-1}(\WWW^\prime)\ \ \ \in\ A_7(F^0)\cong A_7(F)\ ,\]
  where $A_7(F^0)\cong A_7(F)$ for dimension reasons.
    
  We observe that $e\in A_7(F)$ is not proportional to the class of a hyperplane section $h\in A_7(F)$. (Indeed, let $x\in X$ and $w\in(\WWW\setminus\WWW^\prime)_x$ and let $\nu\in A^1((F^0)_x)$ be the tautological class with respect to the projective bundle structure of $(F^0)_x\to\WWW_x$. Then $C:=((F^0)_x)_w\cdot \nu^2\cong\PP^1$ is an effective curve disjoint from $e$, whereas $h\cap C$ has strictly positive degree.) 
  
  Since we know that the fibres $F_x$ have $A_4(F_x)\cong\QQ^2$ (proposition \ref{bcp}), it follows that
   \[ h\vert_{F_x}\ ,\ e\vert_{F_x}\ \ \ \in\ A_4(F_x) \]
   generate $A_4(F_x)$. (Here, $e\vert_{F_x}\in A_4(F_x)$ is defined as $\tau^\ast(e)\in A_4(F_x)$ where $\tau^\ast$ is the refined Gysin homomorphism \cite{F} associated to the regular morphism $\tau\colon x\hookrightarrow X$.)   
   
     The ($5$--dimensional) fibres $F_x$ of the fibration $p_X\colon F\to X$ thus verify
   \[ A_i(F_x)=  \begin{cases}     \QQ\cdot h^{5-i}\vert_{F_x}  &\ \hbox{if}\ i=0,1,5\ ,\\
                                       \QQ\cdot h^3\vert_{F_x} \oplus \QQ\cdot (b\cdot h)\vert_{F_x} &\ \hbox{if}\ i=2\ ,\\
                             \QQ\cdot h^2\vert_{F_x} \oplus \QQ\cdot b\vert_{F_x} &\ \hbox{if}\ i=3\ ,\\
                             \QQ\cdot h\vert_{F_x}\oplus \QQ\cdot e\vert_{F_x} &\ \hbox{if}\ i=4\ .\\
                                 \end{cases}\]

 We would like to prove proposition \ref{p1} following the strategy of proposition \ref{p2}, i.e. invoking higher Chow groups. The only delicate point is that $F$ is singular, and we need to make sense of the operation of ``capping with $h^k$ (or $b$)'' on higher Chow groups. Since this seems difficult\footnote{It is not clear whether operational Chow cohomology operates on higher Chow groups of a singular variety, which is a nuisance.}, we will prove proposition \ref{p1} without using higher Chow groups.
 
The piecewise triviality of the fibration $p_X$ means that there exist opens
   \[ F_0=F\setminus F_{\ge 1}\ ,\ \ \ X_0=X\setminus X_{\ge 1}\ \]
   such that $F_0$ is isomorphic to the product $X_0\times F_x$, and $F_{\ge 1}\to X_{\ge 1}$ is a piecewise trivial fibration (with fibre $F_x$).
   There is a commutative diagram with long exact rows  
     \[ \begin{array}[c]{ccccc}
                     \to  A_{i}(F_{\ge 1})& \xrightarrow{}&\ A_{i}(F )& \xrightarrow{}& A_{i}(F_{0})\ \to 0   \\
    &&&&\\
   \ \ \ \ \ \ \ \  \uparrow {\scriptstyle \Phi_i^{}\vert_{F_{\ge 1}}}&&\ \ \ \ \ \ \  \  \uparrow{\scriptstyle \Phi_i{{}}} &&\ \ \ \uparrow{\scriptstyle \Phi_i\vert_{F_0}}  \\
    &&&&\\
        \to    \bigoplus  A_{i-k}^{}(X_{\ge 1}) &\to &    \bigoplus A_{i-k}^{}(X_{})     &\xrightarrow{} & \  \bigoplus A_{i-k}(X_{0})\ \to\ 0\ .\\
       \end{array}\]    

The arrow $\Phi_i\vert_{F_0}$ is an isomorphism, because the fibre $F_x$ is a {\em linear variety\/} in the sense of \cite{Tot}, which implies (by \cite[Proposition 1]{Tot}, cf. also \cite[Theorem 4.1]{Tot2})  that the natural map
  \[  \bigoplus_{k+\ell=i}   A_k(M)\otimes A_\ell(F_x)\ \to\ A_i(M\times F_x) \]
  is an isomorphism for any variety $M$, and thus in particular for $M=X_0$. By noetherian induction, we may assume that $\Phi_i\vert_{F_{\ge 1}}$ is surjective. Contemplating the diagram, we find that the middle arrow $\Phi_i$ is also surjective.
  
 It remains to prove injectivity. 
 To this end, let us define a map
   \[  \begin{split} \Psi^\prime_i\colon\ \ \ A_i(F)\ &\to\ \bigoplus A_{i-k}(X)\ ,\\
                       a\ &\mapsto\ \Bigl(  (p_X)_\ast (a),  (p_X)_\ast (h\cap a),   (p_X)_\ast (h^2\cap a), (p_X)_\ast (h^2\cap a),\ldots,  (p_X)_\ast (h^5\cap a)\Bigr)\ .
                       \\
                       \end{split}\]
            Using the above--mentioned isomorphism
   \[   \bigoplus_{k+\ell=i}   A_k(X_0)\otimes A_\ell(F_x)\ \xrightarrow{\cong}\ A_i(X_0\times F_x) \ ,\]
one finds that  $ \Psi^\prime_i\vert_{F_0}\circ \Phi_i\vert_{F_0}$ is 
given by an invertible diagonal matrix. Dividing by some appropriate numbers, one can find $\Psi_i$ such that $\Psi_i\vert_{F_0}\circ \Phi_i\vert_{F_0}$ is the identity.

Using the projection formula, we see that there is a commutative diagram
     \[ \begin{array}[c]{ccccccc}
      \to&    \bigoplus  A_{i-k}^{}(X_{\ge 1}) &\to &    \bigoplus A_{i-k}^{}(X_{})     &\xrightarrow{} & \  \bigoplus A_{i-k}(X_{0})& \to \ 0\ \\
       &&&&&&\\
  & \ \ \ \ \ \ \ \  \uparrow {\scriptstyle \Psi_i^{}\vert_{F_{\ge 1}}}&&\ \ \ \ \ \ \  \  \uparrow{\scriptstyle \Psi_i{{}}} &&\ \ \ \uparrow{\scriptstyle \Psi_i\vert_{F_0}} & \\
    &&&&&&\\     
         \to & A_{i}(F_{\ge 1})& \xrightarrow{}&\ A_{i}(F )& \xrightarrow{}& A_{i}(F_{0}) & \to 0   \\
    &&&&&&\\
 &  \ \ \ \ \ \ \ \  \uparrow {\scriptstyle \Phi_i^{}\vert_{F_{\ge 1}}}&&\ \ \ \ \ \ \  \  \uparrow{\scriptstyle \Phi_i{{}}} &&\ \ \ \uparrow{\scriptstyle \Phi_i\vert_{F_0}} & \\
    &&&&&&\\
        \to &   \bigoplus  A_{i-k}^{}(X_{\ge 1}) &\to &    \bigoplus A_{i-k}^{}(X_{})     &\xrightarrow{} & \  \bigoplus A_{i-k}(X_{0})& \to \ 0\ .\\
         \end{array}\]    

We now make the following claim:

\begin{claim}\label{pol} For any given $i$, there exists a polynomial $p_i(x)\in\QQ[x]$ such that 
  \[ a=p_i(\Psi_i\circ \Phi_i)(a)  \ \ \ \forall  a\in  \bigoplus A_{i-k}^{}(X_{}) \ .\]
  \end{claim}
   
Clearly, the claim implies injectivity of $\Phi_i$. To prove the claim, we apply noetherian induction. Given  $a\in  \bigoplus A_{i-k}^{}(X_{}) $, we know that
$  \Psi_i\vert_{F_0}\circ \Phi_i\vert_{F_0}$ acts as the identity on the restriction $a\vert_{X_0}\in  \bigoplus A_{i-k}^{}(X_{0})$. It follows that we can write
  \begin{equation}\label{plug} a - (\Psi_i\circ \Phi_i)(a)=  b\ \ \ \hbox{in}\  \bigoplus A_{i-k}^{}(X_{})\ ,\end{equation}
  where $b$ is in the image of the pushforward map $ \bigoplus A_{i-k}^{}(X_{\ge 1}) \to \bigoplus A_{i-k}^{}(X_{}) $.
  By noetherian induction, we may assume the claim is true for the piecewise trivial fibration $F_{\ge 1}\to X_{\ge 1}$, and so there is a polynomial $q_i$ such that
  \[ b=q_i (\Psi_i\circ \Phi_i)(b)\ \ \ \hbox{in}\ \bigoplus A_{i-k}(X)\ .\]
  Plugging this in (\ref{plug}), we find that
  \[  a - (\Psi_i\circ \Phi_i)(a)=  q_i  (\Psi_i\circ \Phi_i) \Bigl(a -  (\Psi_i\circ \Phi_i)(a)\Bigr)      \ \ \ \hbox{in}\  \bigoplus A_{i-k}^{}(X_{})  \ .\]
  It follows that
    \[ a=p_i(\Psi_i\circ \Phi_i)(a)  \ \ \ \hbox{in}\  \bigoplus A_{i-k}^{}(X_{})  \ ,\]
   where the polynomial $p_i$ is defined as
   \[ p_i(x):= q_i(x) - x q_i(x) +x\ \ \in \QQ[x]\ .\]

 \vskip0.3cm
 \noindent
 (\rom2) As in proposition \ref{p2}, one can also prove a homology version of (\rom1). That is, for any $j\in\NN$ there are isomorphisms
  \begin{equation}\label{isohomf}
      \Phi^h_j\colon\ \ \ \bigoplus H_{j-2k}(X)\ \xrightarrow{\cong}\ H_j(F)\ ,\end{equation}
      where $\Phi^h_j$ is now defined as
      \[ \begin{split}   \Phi^h_j:= \Bigl(    h^5\circ (p_X)^\ast, h^4\circ (p_X)^\ast, h^3\circ (p_X)^\ast, (b\cdot h)\circ (p_X)^\ast, 
              h^2\circ (p_X)^\ast, 
                                      b\circ (p_X)^\ast,&\\
                                       h\circ (p_X)^\ast,   (p_X)^\ast(-)\cap e,   (p_X)^\ast&          \Bigr)\ . \\
                                       \end{split} \]
This is proven just as (\rom1), using homology instead of higher Chow groups. Cycle class maps fit into a commutative diagram
  \[ \begin{array}[c]{ccc}
      \bigoplus A_{i-k}(X)& \xrightarrow{\Phi_i}& A_i(F)\\
      &&\\
      \downarrow&&\downarrow\\
      &&\\
      \bigoplus H_{2i-2k}(X)& \xrightarrow{\Phi^h_{2i}}&\ \  H_{2i}(F)\ .\\
      \end{array}\]
Horizontal arrows being isomorphisms, this proves (\rom2).
 
 \end{proof}

  \begin{remark} Comparing propositions \ref{p2} and \ref{p1}, we observe that the only difference is the class $e\in A_7(F)$ appearing in proposition \ref{p1} but not in \ref{p2}. This ``extra class'' $e$ appears because of the singularities: the fibres of $p\colon Q\to G$ are smooth over the open $U\subset Gr_1$, but degenerate to singular fibres over $X\subset Gr_1$ (cf. remark \ref{sing}), and this causes an extra Weil divisor class $e$ to appear in the singular fibres. This observation will be key to the proof of theorem \ref{main}.
  \end{remark}

\section{Main result}     

\begin{theorem}\label{main} Let $X,Y$ be a pair of GPK$^3$ double mirrors. Then there is an isomorphism 
   \[ h(X)\cong h(Y)\ \ \ \hbox{in}\ \MM_{{\rm rat}}\ .\]
  \end{theorem}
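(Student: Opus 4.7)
The plan is to use the diagram of Proposition \ref{bcp} to construct an explicit correspondence in $A^3(X\times Y)$ inducing the required isomorphism of Chow motives. The natural candidate is built from the common variety $Q\subset Gr_1\times Gr_2^\vee$: since $F\cap G$ is contained in $X\times Y$ (as the locus of incident pairs with $x\in X$ and $y\in Y$), its class $[F\cap G]$, together with refinements involving the operational classes $h$, $b$ and $e$ of Propositions \ref{p1} and \ref{p2}, furnishes a family of distinguished cycles in $A^*(X\times Y)$ from which I would assemble correspondences of the correct codimension.

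The heart of the argument is a Chow-theoretic comparison through $Q$. By Proposition \ref{p1} the Chow groups of $F$ are expressible as a sum of Tate twists of $A^*(X)$, and by Proposition \ref{p2} the Chow groups of $R:=Q\setminus F$ are expressible as a sum of Tate twists of $A^*(U)$, which by Lemma \ref{Utriv} reduce to pure Tate pieces. Plugging these into the localization sequence
\[ A^*(F)\;\to\;A^*(Q)\;\to\;A^*(R)\;\to\;0 \]
yields a decomposition of $A^*(Q)$ into an $h(X)$-controlled part and a Tate part. Running the same analysis on the other side (with $G$, $Q\setminus G$ and $V$ replacing $F$, $R$ and $U$) yields a second decomposition of $A^*(Q)$, this time controlled by $h(Y)$. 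Confronting the two decompositions --- using that their Tate contributions have matching ranks thanks to the Hodge isomorphism $H^3(X)\cong H^3(Y)$ of Theorem \ref{or} --- I would extract a Chow-theoretic identification of the non-Tate parts associated to $h(X)$ and $h(Y)$.

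The final step is to upgrade this Chow-theoretic identification to a motivic isomorphism in $\MM_{\rm rat}$. Each building block occurring in Propositions \ref{p1} and \ref{p2} is induced by a genuine correspondence: flat pullback along $p_X$ or $q_Y$, cap-product with $h^k$, $b$ or $e$, and proper pushforward. Bundling these ingredients into mutually inverse correspondences on $X\times Y$, and invoking Manin's identity principle together with the localization sequences, I would convert the Chow-group comparison into an isomorphism $h(X)\cong h(Y)$ in $\MM_{\rm rat}$, along the lines of the strategies of \cite{L} and \cite{L2} for the closely related Ito--Miura--Okawa--Ueda and Kapustka--Rampazzo Calabi--Yau threefolds.

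The main technical obstacle is the asymmetry flagged in the remark following Proposition \ref{p1}: the singularities of $F$ (and symmetrically those of $G$) force the presence of an extra class $e\in A_7(F)$ that has no counterpart in the smooth fibres of $p_U\colon R\to U$. This class must be matched correctly under the comparison between the two decompositions of $A^*(Q)$ and cannot simply be absorbed into the Tate part. Showing that the contribution of $e$ on the $F$-side corresponds, via $Q$, to the analogous contribution on the $G$-side is the delicate point, and is precisely where the explicit geometric description of the fibres in \cite[Section 7]{BCP} must be leveraged in correspondence-theoretic form.
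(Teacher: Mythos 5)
Your overall architecture --- passing through $Q$, feeding Propositions \ref{p1} and \ref{p2} into localization sequences, and flagging the extra class $e$ as the crux --- is the same as the paper's, but the two central steps of your outline are missing rather than merely compressed. ``Confronting the two decompositions'' of $A_\ast(Q)$ by matching the ranks of their Tate parts via the Hodge isomorphism of Theorem \ref{or} cannot produce a correspondence--induced identification of the non--Tate pieces: rank equality is far too weak, and in fact the paper never uses the derived/Hodge equivalence at all. What is actually needed, and what the paper proves, is: (a) $(\iota_F)_\ast\colon A^{hom}_\ast(F)\to A^{hom}_\ast(Q)$ is surjective, which rests on $A^{hom}_\ast(R)=0$, i.e.\ on Lemma \ref{Utriv} plus the homology computation for $R$; (b) the part of $A_\ast(F)$ built from $(p_X)^\ast$ capped with powers of $h$ and with $b$ lies in the image of the boundary map $A_\ast(R,1)\to A_\ast(F)$ of the higher--Chow localization sequence, hence dies in $Q$, so that only the $e$--component survives and maps isomorphically onto $A^{hom}_\ast(Q)$ (Lemma \ref{l2}, proved via delicate compatibility diagrams between higher Chow groups and the operational classes); and (c) $A^i_{hom}(Q)=0$ for $i\notin\{6,7\}$, via Bloch--Srinivas together with the vanishing of odd homology of $Q$ outside degree $11$ (Propositions \ref{goal2} and \ref{H11}). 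None of these appear in your sketch, and the delicate point you identify is resolved in the paper not by matching the $e$--contribution of the $F$--side against that of the $G$--side, but by showing that each side separately maps isomorphically onto $A^\ast_{hom}(Q)$ and then composing $\Psi^{-1}\circ\Gamma$ through $h^{11}(Q)(-4)$.

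The upgrade to motives also cannot be done by Manin's identity principle as you propose: that would require the Chow--group isomorphisms after product with an arbitrary smooth projective $T$, whereas the key input Lemma \ref{Utriv} uses semisimplicity of polarized Hodge structures and the Hodge conjecture for the threefold $X$, which do not extend to $X\times T$. The paper instead constructs an explicit Chow--K\"unneth decomposition of the smooth $11$--fold $Q$ (exploiting that the ambient $Gr_1\times Gr_2^\vee$ has trivial Chow groups) to isolate $h^{11}(Q)$, and then invokes the universal--domain argument of \cite[Lemma 1.1]{Huy} to convert the isomorphisms $A^i_{hom}(X)\cong A^{i+4}_{hom}(Q)$ into $h^3(X)\cong h^{11}(Q)(-4)$. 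Two further points: your candidate cycle $[F\cap G]$ lives in the wrong codimension ($F\cap G$ is generically $5$--dimensional in the $6$--fold $X\times Y$), and the paper works instead with correspondences in $A^7(X\times Q)$ built from a resolution of singularities of $F$ and the class $e$, not with cycles on $X\times Y$ directly; and Steps 1--3 require $Q$ to be smooth, which holds only for general double mirrors, so a spreading--out argument with relative correspondences (via \cite[Lemma 3.2]{Vo}) is needed to obtain the isomorphism for every pair, as the theorem asserts --- this is absent from your proposal.
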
 
     
  \begin{proof} The proof is a four--step argument, which exploits that the threefolds 
    \[   \begin{split}  X&:=  Gr_1\cap Gr_2\ \ \subset\ \PP\ ,\\
                                Y&:=Gr_1^\vee\cap Gr_2^\vee\ \ \subset\ \PP^\vee\ \\
                                \end{split}\]
     are geometrically related as in proposition \ref{bcp}. In essence, the argument is similar to the proof that $X,Y$ are L--equivalent \cite[Section 7]{BCP}, by applying ``cut and paste'' to the diagram of proposition \ref{bcp}. 
Here is an overview of the proof. Let 
   \[ Q:=  \sigma\times_{\PP\times\PP^\vee} (Gr_1\times Gr_2^\vee)  \]
  be the $11$--dimensional intersection as in proposition \ref{bcp}. Assuming $Q$ is non--singular, we prove there exist isomorphisms of Chow groups
    \begin{equation}\label{overview} A^i_{hom}(X)\ \xrightarrow{\cong}\ A^{i+4}_{hom}(Q)\  \ \ \hbox{for\ all\ }i\ .\end{equation}
  This is done in step 1 (for $i=2,3$) and step 2 (for $i=0,1$), and relies on the isomorphisms for the piecewise trivial fibrations established in the prior section. In step 3, the isomorphism (\ref{overview}) is upgraded to an isomorphism of Chow motives 
    \[ h^3(X) \cong h^{11}(Q)(-4)\ \ \ \hbox{in}\ \MM_{\rm rat}\ .\]
    As $X$ and $Y$ are symmetric, this implies an isomorphism of Chow motives $h^3(X)\cong h^3(Y)$, and hence also $h(X)\cong h(Y)$. Finally, in step 4 we show that we may ``spread out'' this isomorphism to all 
    GPK$^3$ double mirrors.

  \vskip0.5cm
 \noindent 
  {\it Step 1: an isomorphism of Chow groups.\/}  
  In this first step, we assume the $Gr_i$ are sufficiently general, so that $Q$ is non--singular (there is no loss in generality; the degenerate case where $Q$ may be singular will be taken care of in step 4 below). The goal of this first step will be to construct an isomorphism between certain Chow groups of $X$ and $Y$:
  
  \begin{proposition}\label{goal1} There exist correspondences $\Gamma\in A^{7}(X\times Q)$, $\Psi\in A^{7}(Y\times Q)$ inducing isomorphisms
    \[ \begin{split} \Gamma_\ast\colon\ \ \ A^i_{hom}(X)\ \xrightarrow{\cong}\ A^{i+4}_{hom}(Q)\ \ \ \hbox{for}\ i=2,3 ,\\
                            \Psi_\ast\colon\ \ \   A^i_{hom}(Y)\ \xrightarrow{\cong}\ A^{i+4}_{hom}(Q)\ \ \ \hbox{for}\ i=2,3 .\\
           \end{split}\]   
    \end{proposition}

   Before proving this proposition, let us first establish two lemmas (in these lemmas, we continue to assume $X,Y$ are sufficiently general, so that $Q$ is smooth):
   
   \begin{lemma}\label{l1} The pushforward map
     \[ (\iota_F)_\ast\colon\ \ \ A_i^{hom}(F)\ \to\ A_i^{hom}(Q) \]
     is surjective, for all $i$.
     \end{lemma}
     
     \begin{proof} As before, let $R$ denote the open complement $R:=Q\setminus F$. There is a commutative diagram with exact rows
        \[ \begin{array}[c]{cccccccc}
     &\to & A_i(F) &\to& A_i(Q) &\to& A_i(R)&\to 0\\
     &&&&&&&\\
     &&\downarrow&&\downarrow&&\downarrow&\\
  &&&&&&&\\
   0&\to &  H_{2i}(F,\QQ) &\to& H_{2i}(Q,\QQ) &\to&\ \  H_{2i}(R,\QQ)\ ,&\\  
   \end{array}\]
   where vertical arrows are cycle class maps. Here, the lower left entry is $0$ because $R$ has no odd--degree homology (corollary \ref{noodd}). The lemma follows from the fact that the right vertical arrow is injective, which is proposition \ref{p2}(\rom2).    
      \end{proof}

    \begin{lemma}\label{l2} Let $e\in A_7(F)$ be as in proposition \ref{p1}. The composition
    \[   A^i_{hom}(X)\ \xrightarrow{(p_X)^\ast}\ A^{i}_{hom}(F)\ \xrightarrow{ (-)\cap e}\ A_{7-i}^{hom}(F)\ \xrightarrow{(\iota_F)_\ast}\ A_{7-i}^{hom}(Q)=A^{i+4}_{hom}(Q) \]
    is an isomorphism for $i=2,3$. (As before, $A^\ast(F)$ denotes operational Chow cohomology of the singular variety $F$.)
        \end{lemma}  
        
      \begin{proof} Let us treat the case $i=3$ in detail. Proposition \ref{p1} gives us an isomorphism
       \[     \Phi \colon \ \ \ A_3(X)\oplus A_2(X)^{\oplus 2}\oplus A_1(X)^{\oplus 2}\oplus A_0(X)^{\oplus 2}\ \xrightarrow{\cong}\ A_4(F)\ ,\\       \]
        where $\Phi:=\Phi_4$ is defined as
     \[ \Phi= \Bigl(  h^4\circ (p_X)^\ast, h^3\circ (p_X)^\ast, (b\cdot h)\circ (p_X)^\ast,  h^2\circ (p_X)^\ast, 
                                      b\circ (p_X)^\ast , h\circ (p_X)^\ast  ,      (p_X)^\ast(-)\cap e      \Bigr)\ .\\ \]   
                                      
              We want to single out the part in $A_4(F)$ coming from the ``extra class'' $e\in A_7(F)$.
        That is, we write the isomorphism $\Phi$ as a decomposition
       \begin{equation}\label{aperp}   A_4^{}(F) =  A^\perp \oplus A  \ ,\end{equation}
       where
        \[  \begin{split}
        A&:=  (p_X)^\ast A_0{}(X)\cap e\ ,\\     
            A^\perp &:= \ima \Bigl (   A_3(X)\oplus A_2(X)^{\oplus 2}\oplus A_1(X)^{\oplus 2}\oplus A_0(X)       \ \xrightarrow{\Phi^\perp}\ A_4(F)\Bigr)\ ,\\
        \Phi^\perp&:= \Bigl(    h^4\circ (p_X)^\ast, h^3\circ (p_X)^\ast, (b\cdot h)\circ (p_X)^\ast,  h^2\circ (p_X)^\ast, 
                                      b\circ (p_X)^\ast , h\circ (p_X)^\ast            \Bigr)\ .\\ \end{split}\]
        The decomposition (\ref{aperp}) also exists in cohomology, and so there is an induced decomposition
          \begin{equation}\label{aperph}   A_4^{hom}(F) =  A^\perp_{hom} \oplus A_{hom}  \ ,\end{equation}        
   where we put
   \[  A_{hom}:= A\cap A_4^{hom}(F)\ ,\ \ \ A_{hom}^\perp:= A^\perp\cap A_4^{hom}(F)\ .\]

   We now claim that there is a commutative diagram with exact rows
   \begin{equation}\label{claimcom}   \begin{array}[c]{ccccc}
                       A_{4}(R,1)& \xrightarrow{\delta}&\ A_{4}(F)& \xrightarrow{(\iota_F)_\ast}& A_{4}(Q)\ \to   \\
    &&&&\\
    \uparrow {\scriptstyle \Phi_4^1}&&\ \ \ \ \ \ \ \ \ \  \uparrow{\scriptstyle \Phi^\perp} &&\uparrow\\
    &&&&\\
          \bigoplus A_{k}^{}(U,1)  &\xrightarrow{\delta_U} &   \bigoplus A_{k}^{}(X)      &\xrightarrow{\iota_\ast} &  A_{0}(G)\oplus A_1(G)^{\oplus 2}\ \to\ ,\\
       \end{array}\end{equation}     
       where $\Phi_4^1$ is the isomorphism of proposition \ref{p2} (and we use the shorthand $G:=Gr_1$).   
   
   Granting this claim, let us prove the lemma for $i=3$. The kernel of $(\iota_F)_\ast$ equals the image of the arrow $\delta$. Since $\Phi_4^1$ is an isomorphism, the image  $\ima \delta$ is contained in $\ima \Phi^\perp=:A^\perp$. In view of the decomposition (\ref{aperp}), this implies injectivity
   \begin{equation}\label{inj}  (\iota_F)_\ast\colon \ \ A\ \hookrightarrow\ A_4(Q)\ ,\end{equation}
   i.e. the composition of lemma \ref{l2} is injective for $i=3$.
   
   To prove surjectivity, let us consider a class
    $ b \in A^\perp_{hom}$.
  We know (from proposition \ref{p1}(\rom2)) that 
    \[ b= \Phi^\perp (\beta)\ , \ \ \ \beta\in\   A_1^{hom}(X)^{\oplus 2}\oplus A_0^{hom}(X)\ .\]
    Referring to diagram (\ref{claimcom}), we see that $\iota_\ast(\beta)$ must be $0$ (for the Grassmannian $G$ has trivial Chow groups). It follows that $\beta$ is in the image of $\delta_U$, and so $b\in \ima \delta$. This shows that 
    \[ (\iota_F)_\ast ( A^\perp_{hom})=0\ ,\]
    and hence, in view of the decomposition (\ref{aperph}), that
    \[      (\iota_F)_\ast ( A_{hom})=   (\iota_F)_\ast ( A_4^{hom}(F))\ .     \]
    On the other hand, we know that $ (\iota_F)_\ast ( A_4^{hom}(F))= A_4^{hom}(Q)$  (lemma \ref{l1}), and so we get a surjection
    \begin{equation}\label{sur}     (\iota_F)_\ast\colon \ \ A_{hom}\ \twoheadrightarrow\ A_4^{hom}(Q)\ .\end{equation}
    Combining (\ref{inj}) and (\ref{sur}), we see that the composition of lemma \ref{l2} is an isomorphism for $i=3$.

     It remains to establish the claimed commutativity of diagram (\ref{claimcom}).  
      The morphism $p\colon Q\to G$ is equidimensional of relative dimension $5$, and so there is a commutative diagram of complexes (where rows are exact triangles)
      \[ \begin{array}[c]{cccccc}
            z_{i+5}(F,\ast) &\to& z_{i+5}(Q,\ast) &\to& z_{i+5}(R,\ast) &\to\\
            &&&&&\\
            \uparrow{\scriptstyle (p_X)^\ast}&& \uparrow{\scriptstyle p^\ast} &&       \uparrow{\scriptstyle (p_U)^\ast}  \\
            &&&&&\\
              z_{i}(X,\ast) &\to& z_{i}(G,\ast) &\to& z_{i+5}(U,\ast) &\to\\
              \end{array}\]
        Also, given a codimension $k$ subvariety $M\subset Q$, let $z_i^M(Q,\ast)\subset z_i(Q,\ast)$ denote the subcomplex formed by cycles in general position with respect to $M$. The inclusion $z_i^M(Q,\ast)\subset z_i(Q,\ast)$ is a quasi--isomorphism \cite[Lemma 4.2]{B2}.
           The diagram
           \[     \begin{array}[c]{cccccc}           
             z_{i+5-k}(F,\ast) &\to& z_{i+5-k}(Q,\ast) &\to& z_{i+5-k}(R,\ast) &\to\\           &&&&&\\
           &&\uparrow{\scriptstyle \cdot M}&&\uparrow{\scriptstyle \cdot M\vert_R}\\
           &&&&&\\
             && z_{i+5}^M(Q,\ast) &\to& z_{i+5}^M(R,\ast) &\to\\
                  &&&&&\\
                                  &&\downarrow{\scriptstyle \simeq}&&\downarrow{\scriptstyle \simeq}\\
                                &&&&&\\
            z_{i+5}(F,\ast) &\to& z_{i+5}(Q,\ast) &\to& z_{i+5}(R,\ast) &\to\\
            \end{array}\]
            (where $\simeq$ indicates quasi--isomorphisms) defines an arrow in the homotopy category
            \[  f_M\colon\ \ z_{i+5}(F,\ast)\ \to\ z_{i+5-k}(F,\ast)\ .\]
            (The arrow $f_M$ represents ``intersecting with $M$''.)
            On the other hand, let $g\colon\wt{F}\to F$ be a resolution of singularities, and let $\wt{M}:=(\iota_F\circ g)^\ast(M)\in A^k(\wt{F})$. The projection formula for 
            higher Chow groups \cite[Exercice 5.8(\rom1)]{B2} gives a commutative diagram up to homotopy
            \[ \begin{array}[c]{ccc}
                      z_{i+5-k}(\wt{F},\ast) & \xrightarrow{(\iota_F\circ g)_\ast} & z_{i+5-k}(Q,\ast)\\
                      &&\\
                      \uparrow{\scriptstyle \cdot\wt{M}}&&\uparrow{\scriptstyle \cdot M}\\
                      &&\\
                       z_{i+5}^{\vert\wt{M}\vert}(\wt{F},\ast) & \xrightarrow{(\iota_F\circ g)_\ast} & \ \ z_{i+5}^M(Q,\ast)\ ,\\   
                     \end{array}\]          
      and so there is also a commutative diagram up to homotopy
              \[ \begin{array}[c]{ccc}
                      z_{i+5-k}(\wt{F},\ast) & \xrightarrow{} & z_{i+5-k}(F,\ast)\\
                      &&\\
                      \uparrow{\scriptstyle \cdot\wt{M}}&&\uparrow{\scriptstyle f_M}\\
                      &&\\
                       z_{i+5}^{}(\wt{F},\ast) & \xrightarrow{} & \ \ z_{i+5}(F,\ast)\ .\\   
                     \end{array}\]                
             In particular, this shows that
             \[ f_M =  (\iota_F)^\ast(M)\cap (-)\colon\ \ \ A_{i+5}(F)\ \to\ A_{i+5-k}(F) \ ,\]
             where we consider $  (\iota_F)^\ast(M)\in A^k(F)$ as an element in operational Chow cohomology.      
             
             Combining the above remarks, one obtains a commutative diagram with long exact rows
             \[ \begin{array}[c]{ccccc}
                       A_{i+5-k}(R,1)& \xrightarrow{}&\ A_{i+5-k}(F)& \xrightarrow{(\iota_F)_\ast}& A_{i+5-k}(Q)\ \to   \\
    &&&&\\
   \ \ \ \uparrow {\scriptstyle \cdot M\vert_R}&&\ \ \ \ \ \ \ \ \ \  \ \ \uparrow{\scriptstyle (\iota_F)^\ast(M)\cap (-)} &&\uparrow\\
    &&&&\\
            A_{i+5}(R,1)& \xrightarrow{}&\ A_{i+5}(F)& \xrightarrow{(\iota_F)_\ast}& A_{i+5}(Q)\ \to   \\    
            &&&&\\
   \ \ \  \uparrow {\scriptstyle (p_U)^\ast}&&\ \  \uparrow{\scriptstyle (p_X)^\ast} &&\ \ \ \uparrow{\scriptstyle p^\ast}\\
    &&&&\\
     A_{i}^{}(U,1)  &\to &    A_{i}^{}(X)      &\xrightarrow{\iota_\ast} &  A_{i}(G)\ \to\ .\\
       \end{array}\]     
      In particular, these diagrams exist for $M$ being (a representative of) the classes $h^j, b\cdot h, b \in A^\ast(Q)$ that make up the definition of the map 
      $\Phi^\perp$.               
    It follows there is a commutative diagram with long exact rows
    \[ \begin{array}[c]{ccccc}
                       A_{4}(R,1)& \xrightarrow{}&\ A_{4}(F)& \xrightarrow{(\iota_F)_\ast}& A_{4}(Q)\ \to   \\
    &&&&\\
    \uparrow {}&&\ \ \ \ \ \ \ \ \ \  \uparrow{\scriptstyle \Phi^\perp} &&\uparrow\\
    &&&&\\
          \bigoplus A_{k}^{}(U,1)  &\to &   \bigoplus A_{k}^{}(X)     &\xrightarrow{} &  \bigoplus A_{k}(G)\ \to\ ,\\
       \end{array}\]

The $i=2$ case of the lemma is proven similarly: using proposition \ref{p2}, we can write
          \begin{equation}\label{adecomp}   A_5^{hom}(F) =   A_5^\perp\oplus  (p_X)^\ast A^2_{hom}(X)\cap e \ .\end{equation}
        Here $A_5^\perp$ is
  \[ A_5^\perp = \ima \Bigl (     A^3_{hom}(X)^{}\oplus   A^2_{hom}(X)^{}   \ \xrightarrow{\Phi^\perp}\ A_5(F)\Bigr)\ ,\]
      where $\Phi^\perp$ is defined as
      \[ \Phi^\perp:= \Bigl(   (p_X)^\ast(-),  h\cap (p_X)^\ast(-)\Bigr)\ .\]
                       As above, there is a commutative diagram with long exact rows
                           \[ \begin{array}[c]{ccccc}
                       A_{5}(R,1)& \xrightarrow{}&\ A_{5}(F)& \xrightarrow{(\iota_F)_\ast}& A_{5}(Q)\ \to   \\
    &&&&\\
    \uparrow {}&&\ \ \ \ \ \ \ \ \ \  \uparrow{\scriptstyle \Phi^\perp} &&\uparrow\\
    &&&&\\
           A_{0}^{}(U,1)\oplus A_1(U,2)^{}  &\to &    A_{0}^{}(X)\oplus A_1(X)^{}      &\xrightarrow{\iota_\ast} &  A_{0}(G)\oplus A_1(G)^{}\ \to\ .\\
       \end{array}\]     
       As above, chasing this diagram we conclude that
       \[ (\iota_F)_\ast     (A_5^\perp)=  (\iota_F)_\ast \Phi^\perp \Bigl(   A_{0}^{hom}(X)\oplus A_1^{hom}(X)^{} \Bigr)  =0\ \ \ \hbox{in}\  A_5(Q)\ .\]
       It follows that the restriction of $(\iota_F)_\ast$ to the second term of the decomposition (\ref{adecomp}) induces an isomorphism
        \[ (\iota_F)_\ast        \colon\ \          (p_X)^\ast A^2_{hom}(X)\cap e    \ \xrightarrow{\cong}\ A_5^{hom}(Q)\ . \]
                                 \end{proof}

Let us now proceed to prove proposition \ref{goal1}. We will construct the correspondence $\Gamma$ (the construction of $\Psi$ is only notationally different, the roles of $X$ and $Y$ being symmetric). The variety $X$ is smooth, and the variety $Q$ of proposition \ref{bcp} is also smooth, by our generality assumptions. The variety $F$, however, is definitely singular (remark \ref{sing}), and so we need to desingularize.
 Let $g\colon\wt{F}\to F$ be a resolution of singularities. 
 We let $\bar{e}\subset\wt{F}$ denote the strict transform of $e\subset F$, and $\wt{e}\to\bar{e}$ a resolution of singularities, and we write $\tau\colon \wt{e}\to\wt{F}$ for the composition of the resolution and the inclusion morphism.
 The correspondence $\Gamma$ will be defined as
   \[  \Gamma:= \Gamma_{\iota_{{F}}}\circ \Gamma_g \circ  (\Gamma_{{\tau}} \circ {}^t \Gamma_{{\tau}}) \circ {}^t \Gamma_g \circ {}^t \Gamma_{{p_X}}\ \ \ \in \
       A^7(X\times Q)\ . \]
   By definition, the action of $\Gamma$ decomposes as
      \[  \begin{split} \Gamma_\ast\colon\ \ A^i(X)\ \xrightarrow{(p_X)^\ast} \ A^i(F) \ \xrightarrow{g^\ast}\ A^i(\wt{F})
     \ \xrightarrow{\cdot \bar{e}}\ A^{i+1}(\wt{F})=A_{7-i}(\wt{F})\ \xrightarrow{g_\ast} &\\ A_{7-i}(F)\ \xrightarrow{(\iota_F)_\ast}\ A_{7-i}(Q)= A^{i+4}(Q)\ . &\\
      \end{split}
        \]
         (Here $A^\ast(F)$ refers to Fulton--MacPherson's operational Chow cohomology \cite{F}.)
         The projection formula for operational Chow cohomology (theorem \ref{oper}) ensures that for any $b\in A^i(F)$ one has
         \[ g_\ast ( g^\ast(b)\cdot \bar{e})= g_\ast ( g^\ast(b)\cap \bar{e})=b\cap e\ \ \ \hbox{in}\ A_{7-i}(F)\ ,\]
         and so the action of $\Gamma$ simplifies to
       \[    \Gamma_\ast\colon\ \ A^i(X)\ \xrightarrow{(p_X)^\ast} \ A^i(F) 
     \ \xrightarrow{(\ )\cap {e}}\  A_{7-i}(F)\ \xrightarrow{(\iota_F)_\ast}\ A_{7-i}(Q)= A^{i+4}(Q)\ . 
         \]   
  Lemma \ref{l2} ensures that $\Gamma_\ast$ induces isomorphisms
   \[ \Gamma_\ast\colon\ \ A^i_{hom}(X)\ \xrightarrow{\cong}\ A^{i+4}_{hom}(Q)\ \ \ (i=2,3)\ ,\]
  and so we have proven proposition \ref{goal1}.

\vskip0.5cm
\noindent
{\it Step 2: Trivial Chow groups.\/} In this step, we study the Chow groups of the incidence variety $Q$. We continue to assume that $Q$ is smooth, just as in step 1.
The goal of step 2 will be to show that many Chow groups of $Q$ are trivial:

\begin{proposition}\label{goal2} We have
  \[ A^i_{hom}(Q)=0\ \ \ \hbox{for\ all\ }i\not\in\{6,7\}\ .\]
  \end{proposition}
  (This means that $\hbox{Niveau}(A^\ast(Q))\le 3$ in the language of \cite{moi}, i.e. the $11$--dimensional variety $Q$ motivically looks like a variety of dimension $3$.)
  
  \begin{proof} Suppose we can prove that
    \begin{equation}\label{<}  A_j^{hom}(Q)=0\ \ \ \hbox{for}\ j< 4\ .\end{equation}
       Applying the Bloch--Srinivas argument \cite{BS}, \cite[Remark 1.8.1]{moi} to the smooth projective variety $Q$, this implies that
       \[ A^i_{AJ}(Q)=0\ \ \ \hbox{for\ all\ }i\not\in\{6,7\}\ .\]
But $Q$ has no odd--degree cohomology except for degree $11$ (proposition \ref{H11} below), and so there is equality $A^i_{AJ}(Q)=A^i_{hom}(Q)$ for all $i\not=6$. That is, to prove proposition \ref{goal2} one is reduced to proving (\ref{<}).

  There is an exact sequence
   \[  A_j(R,1)\ \xrightarrow{\delta}\ A_j(F)\ \xrightarrow{(\iota_F)_\ast}\ A_j(Q)\ \to\ \ \ ,\]
   and we have seen (lemma \ref{l1}) that $(\iota_F)_\ast A_j^{hom}(F)=A_j^{hom}(Q)$. Thus, to prove the vanishing (\ref{<}) it only remains to show that
   \begin{equation}\label{want}  A_j^{hom}(F)\ \ \subset\ \ima \, \delta\ \ \ \hbox{for\  }j< 4\ .\end{equation}
   
   The inclusion (\ref{want}) is proven by the same argument as that of lemma \ref{l2}. That is, we observe that propositions \ref{p1} and \ref{p2} give us isomorphisms
   \begin{equation}\label{isos} \begin{split}   \Phi_j\colon\ \ \bigoplus A_{j-k}^{}(X)        \ &\xrightarrow{\cong} \ A_j^{}(F)\ ,\\
                          \Phi^1_j\colon\ \ \bigoplus A_{j-k}^{}(U,1)        \ &\xrightarrow{\cong} \ A_j^{}(R,1)\ ,\\
                          \end{split} \end{equation}
               where $\Phi_j, \Phi_j^1$ are defined as       
       \[ \begin{split}   \Phi_j&= \begin{cases}  h^5 \circ (p_X)^\ast     &\ \ \hbox{if}\ j=0\ ,\\
                                                                      \sum_{k=0}^1  h^{5-k}\circ (p_X)^\ast   &\ \ \hbox{if}\ j=1\ ,\\    
      \sum_{k=0}^2  h^{5-k}\circ (p_X)^\ast + b \circ (p_X)^\ast     &\ \ \hbox{if}\ j=2\ ,\\     
          \sum_{k=0}^3  h^{5-k}\circ (p_X)^\ast        + b \circ (p_X)^\ast   +   (b\cdot h)\circ (p_X)^\ast     &\ \ \hbox{if}\ j=3\ ,\\   
         \end{cases} \\
                               \Phi^1_j&= \begin{cases}  h^5 \circ (p_U)^\ast     &\ \ \hbox{if}\ j=0\ ,\\
                                                                      \sum_{k=0}^1  h^{5-k}\circ (p_U)^\ast   &\ \ \hbox{if}\ j=1\ ,\\    
 \sum_{k=0}^2  h^{5-k}\circ (p_U)^\ast + b \circ (p_U)^\ast     &\ \ \hbox{if}\ j=2\ ,\\     
 \sum_{k=0}^3  h^{5-k}\circ (p_X)^\ast + b \circ (p_U)^\ast  + (b\cdot h)\circ (p_U)^\ast     &\ \ \hbox{if}\ j=3\ .\\   
         \end{cases} \\       
         \end{split}\]      
         
   In particular, we observe that for each $j\le 3$, the isomorphisms $\Phi_j, \Phi^1_j$ of (\ref{isos}) have the same number of direct summands on the left--hand side (i.e., the ``extra class'' $e\in A_7(F)$ does not appear).
   For each $j\le 3$, we can construct a commutative diagram with exact rows
   \[ \begin{array}[c]{ccccc}    
    A_j(R,1)& \xrightarrow{\delta}&\ A_j(F)& \xrightarrow{(\iota_F)_\ast}& A_j(Q)\ \to   \\
    &&&&\\
   \ \  \uparrow {\scriptstyle \Phi_j^1}&&\ \  \ \uparrow{\scriptstyle \Phi_j} &&\uparrow\\
    &&&&\\
    \bigoplus A_{j-k}^{}(U,1)  &\xrightarrow{\delta_U} & \bigoplus   A_{j-k}^{}(X)      &\xrightarrow{\iota_\ast} &  \bigoplus A_{j-k}(Gr_1)\\
       \end{array}\]   
     (the commutativity of this diagram is checked as in the proof of lemma \ref{l2}).    
     
     Let $a\in A_j^{hom}(F)$, for $j\le 3$. Then $a=\Phi_j(\alpha)$ for some $\alpha\in\bigoplus A_{j-k}^{hom}(X)$ (proposition \ref{p1}(\rom2)). But then $\iota_\ast(\alpha)=0$, since the Grassmannian $Gr_1$ has trivial Chow groups. Using the above diagram, it follows that $\alpha$ is in the image of $\delta_U$, and hence
       $a\in\ima \delta$. This proves (\ref{want}) and hence proposition \ref{goal2}.    
                                   
  (NB: in fact, the above argument does {\em not\/} need that $\Phi^1_j$ is an isomorphism; we merely need the fact that a map $\Phi^1_j$ fitting into the above commutative diagram exists.)
 
 To close step 2, it only remains to prove the following proposition:
 
 \begin{proposition}\label{H11} Assume $j$ is odd and $j\not=11$. Then
    \[  H_j(Q)=0\ .\]
  \end{proposition}
  
  \begin{proof} For $j$ odd and different from $11$, there is a commutative diagram with exact row
     \[ \begin{array}[c]{cccccc}    
    H_{j+1}(R)& \xrightarrow{\delta}&\ H_j(F)& \xrightarrow{(\iota_F)_\ast}& H_j(Q) & \to 0  \\
    &&&&&\\
    \uparrow {\scriptstyle \Phi_{j+1}^h}&& \uparrow{\scriptstyle \Phi^h_j} &&&\\
    &&&&&\\
    \bigoplus H_{j+1-2k}^{}(U)  &\to & \bigoplus   H_{j-2k}^{}(X) \ .     & & \ \   & \\
       \end{array}\]    
     Here $\Phi^h_j$ and $\Phi^h_{j+1}$ are the isomorphisms of (\ref{isohomf}) resp. (\ref{isohomr}).  
       The righthand $0$ is because $H_j(R)=0$ for $j$ odd (corollary \ref{noodd}). 
       
       We observe that $j\not=11$ implies that $k\not=4$ (the only odd homology of $X$ is $H_3(X)$), which means that the ``extra class'' $e\in A_7(F)$ does not intervene in the map $\Phi^h_j$. It follows that there are the same number of direct summands in the isomorphisms $\Phi^h_j, \Phi^h_{j+1}$ (they are both defined in terms of $h^k$ and $b$). 
    Observing that a Grassmannian does not have odd--degree cohomology, we thus see that the lower horizontal arrow is surjective.   
          The diagram now shows that $\delta$ is surjective, and hence $(\iota_F)_\ast $ is the zero--map. The proposition is proven. 
        \end{proof} 
 
 This ends the proof of proposition \ref{goal2}.
  \end{proof}

 \vskip0.5cm
 \noindent
 {\it Step 3: an isomorphism of motives.\/} We continue to assume (as in steps 1 and 2) that $X,Y$ are general, so that $Q$ is smooth.
 
 The assignment $\pi^3_X:=\Delta_X-\pi^0_X-\pi^2_X-\pi^4_X-\pi^6_X$ (where the K\"unneth components $\pi^j_X$, $j\not=3$ are defined using hyperplane sections) defines a motive $h^3(X)$ such that there is a splitting
   \[ h(X)=  \mathds{1} \oplus \mathds{1}(1)\oplus h^3(X) \oplus \mathds{1}(2) \oplus \mathds{1}(3) \ \ \ \hbox{in}\ \MM_{\rm rat}\ ,\]
   where $\mathds{1}$ is the motive of a point. It follows that
     \begin{equation}\label{h3} A^i(h^3(X))= A^i_{hom}(X)\ \ \ \forall i\ .\end{equation}
 
 The variety $Q$ is a smooth ample divisor in the product $P:=Gr_1\times Gr_2$. The product $P$ has trivial Chow groups, and hence in particular verifies the standard conjectures. It follows that $P$ admits a (unique) Chow--K\"unneth decomposition $\{\pi^i_P\}$, and that there exist correspondences $C^j\in A^{12+j}(P\times P)$ such that 
  \[ (C^j)_\ast\colon\ \  H^{12+j}(P)\ \to\ H^{12-j}(P) \]
  is inverse to
  \[ \cup Q^j\colon\ \ H^{12-j}(P)\ \to\ H^{12+j}(P)\ .\]
  (The correspondences $C^j\in A^{12+j}(P\times P)$ are well--defined, as rational and homological equivalence coincide on $P\times P$.)
  
 Let $\tau\colon Q\to P$ denote the inclusion morphism. One can construct a Chow--K\"unneth decomposition for $Q$, by setting
  \[    \pi^i_Q :=  \begin{cases}  {}^t \Gamma_\tau   \circ (\Gamma_\tau\circ{}^t \Gamma_\tau)^{\circ 11-i} \circ C^{12-i}  \circ  \pi_P^{i}  \circ \Gamma_\tau   &  \ \ \ \hbox{if\ } i<11\ ,\\
                                                 {}^t \pi^{22-i}_Q & \ \ \ \hbox{if\ } i>11\ ,\\  
                                                 \Delta_Q -\sum_{j\not= 11} \pi^j_Q \ \ \ \ \in\ A^{11}(Q\times Q)& \ \ \ \hbox{if\ }i=11\ .
                  \end{cases}\]
         (To check this is indeed a Chow--K\"unneth decomposition, one remarks that         
         \[   (\Gamma_\tau\circ{}^t \Gamma_\tau)^{\circ 12-i} \circ C^{12-i}  \circ  \pi_P^{i}   = \pi_P^i\ \ \ \hbox{in}\ H^{24}(P\times P)\ ,\]
         and because $P\times P$ has trivial Chow groups one has the same equality modulo rational equivalence:
            \[   (\Gamma_\tau\circ{}^t \Gamma_\tau)^{\circ 12-i} \circ C^{12-i}  \circ  \pi_P^{i}   = \pi_P^i\ \ \ \hbox{in}\ A^{12}(P\times P)\ .\]
       It is now readily checked that $\{\pi^i_Q\}$ verifies $\pi^i_Q\circ\pi^j_Q=\delta_{ij} \pi^i_Q$ in $A^{11}(Q\times Q)$, where $\delta_{ij}$ is the Kronecker symbol.)

  Setting $h^i(Q):=(Q,\pi^i_Q,0)$, this induces a decomposition of the motive of $Q$ as
   \[ h(Q) =\bigoplus \mathds{1}(\ast)\oplus h^{11}(Q)\ \ \ \hbox{in}\ \MM_{\rm rat}\ ,\]
  and hence one has
  \begin{equation}\label{h11}  A^i(h^{11}(Q))= A^i_{hom}(Q)\ \ \ \forall i\ .\end{equation}   
  
  We now consider the homomorphism of motives
  \[ \Gamma\colon\ \ \ h^3(X)\ \to\ h^{11}(Q)(-4)\ \ \  \hbox{in}\ \MM_{\rm rat}\ ,\]
where $\Gamma\in A^7(X\times Q)$ is as in step 1. We have seen in steps 1 and 2 that there are isomorphisms
 \[ \Gamma_\ast\colon\ \ \ A^i_{hom}(X)\ \xrightarrow{\cong}\ A^{i+4}_{hom}(Q)\ \ \ \forall i\ .\]
 In view of (\ref{h3}) and (\ref{h11}), this translates into
  \[   \Gamma_\ast\colon\ \ \ A^i_{}(h^3(X))\ \xrightarrow{\cong}\ A^{i+4}_{}(h^{11}(Q))=A^i(h^{11}(Q)(-4))\ \ \ \forall i\ .\]
  Using that the field $\C$ is a universal domain, this implies (cf. \cite[Lemma 1.1]{Huy}) there is an isomorphism of motives
  \[ \Gamma\colon\ \ \ h^3(X)\ \xrightarrow{\cong}\ h^{11}(Q)(-4)\ \ \  \hbox{in}\ \MM_{\rm rat}\ .\]
  The roles of $X$ and $Y$ being symmetric, the same argument also furnishes an isomorphism
  \[ \Psi\colon\ \ \ h^3(Y)\ \xrightarrow{\cong}\ h^{11}(Q)(-4)\ \ \  \hbox{in}\ \MM_{\rm rat}\ .\]
  The result is an isomorphism
  \[ \Psi^{-1}\circ \Gamma\colon\ \ \ h^3(X)\ \xrightarrow{\cong}\ h^3(Y)\ \ \  \hbox{in}\ \MM_{\rm rat}\ .\]
  Since the difference $h(X)-h^3(X)$ is just $\mathds{1} \oplus \mathds{1}(1)\oplus \mathds{1}(2) \oplus \mathds{1}(3)$, which is the same as $h(Y)-h^3(Y)$, there is also an isomorphism
  \[ h(X)\ \xrightarrow{\cong}\ h(Y)\ \ \ \hbox{in}\ \MM_{\rm rat}\ .\]

 \vskip0.5cm
 \noindent
 {\it Step 4: Spreading out.\/} We have now proven that there is an isomorphism of Chow motives $h^3(X)\cong h^3(Y)$ for a general pair of GPK$^3$ double mirrors. To extend this to {\em all\/} pairs of double mirrors, we reason as follows. Let
   \[ \pi_X\colon\ \ \ \XX\ \to\ B \]
   denote the universal family of all GPK$^3$ threefolds (so $B$ is an open in $PGL(\wedge^2 V)$). The double mirror construction corresponds to an involution $\sigma$ on $B$, such that $X_b$ and $Y_b:=X_{\sigma(b)}$ are double mirrors. We define the family $\YY$ as the composition
   \[   \pi_Y:=\sigma\circ\pi_X\colon\ \ \ \YY:=\XX\ \to\ B\ .\]
   The above construction of the correspondences $\Gamma,\Psi$ can be done relatively: over the open $B^0\subset B$ where the incidence variety $Q$ is smooth, one obtains relative correspondences
   \[ \Gamma\ \ \in A^3(\XX\times_{B^0}\YY)\ ,\ \ \Psi\ \ \in\ A^3(\YY\times_{B^0}\XX)\ ,\]
   such that for each $b\in B^0$ the restrictions $\Gamma_b\in A^3(X_b\times Y_b)$, $\Psi_b\in A^3(Y_b\times X_b)$ verify
   \[  \begin{split} &\pi^3_{X_b}= \Psi_b\circ \Gamma_b\ \ \ \hbox{in}\ A^3(X_b\times X_b)\ ,\\
                          &\pi^3_{Y_b}=  \Gamma_b\circ\Psi_b\ \ \ \hbox{in}\ A^3(Y_b\times Y_b)\ .\\
          \end{split}\]
      Taking the closure, one obtains extensions $\bar{\Gamma}\in A^3(\XX\times_B \YY)$, $\bar{\Psi}\in A^3(\YY\times_B \XX)$ to the larger base $B$, that restrict to $\Gamma$ resp. $\Psi$.    
          
     The correspondences $\pi^3_{X_b},\pi^3_{Y_b}$ also exist relatively (note that any GPK$^3$ threefold has Picard number $1$, and so $\pi^2,\pi^4$ exist as relative correspondences). The relative correspondences
     \[  \pi^3_\XX - \bar{\Psi}\circ\bar{\Gamma}\ \ \in\ A^3(\XX\times_B \XX)\ ,\ \ \  \pi^3_\YY - \bar{\Gamma}\circ\bar{\Psi}\ \ \in\ A^3(\YY\times_B \YY)   \]
     have the property that their restriction to a general fibre is rationally trivial. But this implies (cf. \cite[Lemma 3.2]{Vo}) that the restriction to {\em every\/} fibre is rationally trivial, and hence we obtain an isomorphism of motives for {\em all\/} $b\in B$, i.e. for all pairs $(X_b,Y_b)$ of double mirrors.                          
   \end{proof}

\begin{remark}
In the proof of theorem \ref{main}, we have not explicitly determined the inverse to the isomorphism $\Gamma$. With some more work, it is actually possible to show that there exists 
$m\in\QQ$ such that
  \[ {1\over m}\, {}^t \Gamma\colon\ \ h^{11}(Q)\ \to\ h^3(X)\ \ \ \hbox{in}\ \MM_{\rm rat} \]
  is inverse to $\Gamma$.
  \end{remark}

\begin{remark} It would be interesting to extend theorem \ref{main} to the category $\MM_{\Z rat}$ of Chow motives with integral coefficients. Let $X,Y$ be as in theorem \ref{main}. Is it true that $h(X)$ and $h(Y)$ are isomorphic in $\MM_{\Z{\rm rat}}$ ? 
Steps 1 and 2 in the above proof {\em probably\/} still work for Chow groups with $\Z$--coefficients (one just needs to upgrade the fibration results of section \ref{sfib} to $\Z$--coefficients); steps 3 and 4, however, certainly need $\QQ$--coefficients.
\end{remark}

\begin{remark} In all likelihood, an argument similar to that of theorem \ref{main} could also be applied to establish an isomorphism of Chow motives for the Grassmannian--Pfaffian Calabi--Yau varieties of \cite{Bor}, \cite{Mar}, as well as for the Calabi--Yau fivefolds of \cite{Man}.
\end{remark}

\section{Some corollaries}

\begin{corollary}\label{cor} Let $X,Y$ be two GPK$^3$ double mirrors. Let $M$ be any smooth projective variety. Then there are isomorphisms
  \[ N^j H^i(X\times M,\QQ)\cong N^j H^i(Y\times M,\QQ)\ \ \ \hbox{for\ all\ }i,j\ .\]
  (Here, $N^\ast$ denotes the coniveau filtration \cite{BO}.)
\end{corollary}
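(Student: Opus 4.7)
The plan is to deduce this from theorem \ref{main} by tensoring with the motive of $M$ and then invoking the standard fact that correspondences respect the coniveau filtration. Theorem \ref{main} supplies mutually inverse correspondences $\Gamma\in A^3(X\times Y)$ and $\Psi\in A^3(Y\times X)$ with $\Psi\circ\Gamma=\Delta_X$ in $A^3(X\times X)$ and $\Gamma\circ\Psi=\Delta_Y$ in $A^3(Y\times Y)$. The exterior products
\[ \wt{\Gamma}:=\Gamma\times\Delta_M\ \in\ A^{3+\dim M}\bigl((X\times M)\times(Y\times M)\bigr)\ ,\qquad \wt{\Psi}:=\Psi\times\Delta_M  \]
are mutually inverse correspondences realising an isomorphism $h(X\times M)\cong h(Y\times M)$ in $\MM_{\rm rat}$. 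In particular $\wt{\Gamma}_\ast\colon H^i(X\times M,\QQ)\xrightarrow{\cong} H^i(Y\times M,\QQ)$ is a $\QQ$--linear isomorphism for each $i$, with inverse $\wt{\Psi}_\ast$.

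The heart of the argument is then to check that $\wt{\Gamma}_\ast$ and $\wt{\Psi}_\ast$ restrict to isomorphisms on each piece $N^j H^i$. For this I would establish the following general statement: for any correspondence $\gamma\in A^\ast(A\times B)$ between smooth projective varieties, one has $\gamma_\ast(N^j H^i(A,\QQ))\subset N^j H^i(B,\QQ)$. Indeed, if $\alpha\in N^jH^i(A,\QQ)$, then by definition $\alpha$ lies in the image of $H^i_T(A,\QQ)\to H^i(A,\QQ)$ for some closed subvariety $T\subset A$ of codimension $\geq j$. Using the formula
\[ \gamma_\ast(\alpha)=(pr_B)_\ast\bigl((pr_A)^\ast\alpha\cdot\gamma\bigr)\ ,\]
and choosing a cycle representative for $\gamma$ in general position with respect to $T\times B$, the class $(pr_A)^\ast\alpha\cdot\gamma$ is supported on $|\gamma|\cap(T\times B)$. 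A straightforward dimension count shows that the image of this set under $pr_B$ is a subvariety of $B$ of codimension $\geq j$, so $\gamma_\ast(\alpha)$ lies in $N^j H^i(B,\QQ)$, as desired.

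Applying this general statement to $\gamma=\wt{\Gamma}$ and $\gamma=\wt{\Psi}$ gives mutually inverse maps
\[ N^j H^i(X\times M,\QQ)\ \rightleftarrows\ N^j H^i(Y\times M,\QQ)\ ,\]
which are therefore isomorphisms, proving the corollary. The only delicate point is the support--and--dimension argument in the second paragraph; this is classical (and implicit in \cite{BO}) but is the one step worth writing out carefully. Everything else is a formal consequence of theorem \ref{main} and the tensor structure on $\MM_{\rm rat}$.
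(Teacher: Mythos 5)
Your proof is correct and follows the same overall strategy as the paper: tensor the isomorphism $h(X)\cong h(Y)$ with $h(M)$ to get $h(X\times M)\cong h(Y\times M)$, then use that correspondence‑induced maps respect the coniveau filtration. The only difference is that the paper disposes of the second step in one line by citing \cite{AK} and \cite[Proposition 1.2]{V1} (``the cohomology and the coniveau filtration only depend on the motive''), whereas you write out the standard moving‑lemma/support‑and‑dimension argument establishing that a correspondence $\gamma$ between smooth projective varieties sends $N^jH^i$ into $N^jH^i$. That argument is sound: after moving $\gamma$ into general position with $T\times B$ (Chow's moving lemma on the smooth projective $A\times B$), the cup product of a class supported on $T\times B$ with $[\gamma]$ is supported on $|\gamma|\cap(T\times B)$, whose projection to $B$ has codimension $\geq j$ by the dimension count you indicate; and $\gamma_\ast\alpha$ does not depend on the choice of cycle representative since it only uses the cohomology class of $\gamma$. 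One very small remark: the mutually inverse correspondences you denote $\Gamma,\Psi\in A^3(X\times Y),A^3(Y\times X)$ are not literally the $\Gamma,\Psi\in A^7(X\times Q),A^7(Y\times Q)$ of the paper's Step 1, but rather the composite $\Psi^{-1}\circ\Gamma$ (extended from $h^3$ to $h$) and its inverse; this is purely notational and does not affect the argument.
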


\begin{proof} Theorem \ref{main} implies there is an isomorphism of Chow motives $h(X\times M)\cong h(Y\times M)$. As the cohomology and the coniveau filtration only depend on the motive \cite{AK}, \cite[Proposition 1.2]{V1}, this proves the corollary.

\end{proof}

\begin{remark} It is worth noting that for any derived equivalent threefolds $X,Y$, there are isomorphisms
  \[ N^j H^i(X,\QQ)\cong N^j H^i(Y,\QQ)\ \ \ \hbox{for\ all\ }i,j\ ;\]
this is proven in \cite{ACV}.
 \end{remark}

\begin{corollary}\label{cor2} Let $X,Y$ be two GPK$^3$ double mirrors. Then there are (correspondence--induced) isomorphisms between higher Chow groups
  \[  A^i(X,j)_{}\ \xrightarrow{\cong}\ A^i(Y,j)_{} \ \ \ \hbox{for\ all\ }i,j\ .\]
  There are also (correspondence--induced) isomorphisms in higher algebraic K--theory
  \[ G_j(X)_{\QQ}\ \xrightarrow{\cong}\ G_j(Y)_{\QQ}\ \ \ \hbox{for\ all\ }j\ .\]
\end{corollary}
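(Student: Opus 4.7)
The plan is to promote the Chow--motivic isomorphism $h(X)\cong h(Y)$ of Theorem \ref{main} to the stated isomorphisms by invoking realization functors: higher Chow groups $A^\ast(-,j)$ for each fixed $j$, and then rational higher $G$--theory through the $\gamma$--filtration. Since both are expected to factor through $\MM_{\rm rat}$, the argument will be purely formal once this functoriality is in hand.

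First, I would verify that for each fixed $j\in\NN$ the higher Chow groups $A^\ast(-,j)$ define a realization functor on $\MM_{\rm rat}$. Concretely, for smooth projective $X,Y$ any correspondence $\Gamma\in A^r(X\times Y)$ should induce a map $\Gamma_\ast\colon A^i(X,j)\to A^{i+r-\dim X}(Y,j)$ via the usual recipe $\alpha\mapsto (p_Y)_\ast\bigl(p_X^\ast(\alpha)\cdot\Gamma\bigr)$; the flat pullback, proper pushforward and intersection product appearing here are Bloch's operations on higher Chow groups \cite{B2}, \cite{B3}, and they satisfy the projection formula, which guarantees compatibility with composition of correspondences and with the diagonal acting as identity. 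A cleaner conceptual repackaging is the identification of rational higher Chow groups with motivic cohomology \cite{Fr}, \cite{MVW}, a manifestly $\MM_{\rm rat}$--linear theory. Applied to the isomorphism of Theorem \ref{main}, which is induced by the explicit correspondences constructed in step 4 of its proof, this at once yields correspondence--induced isomorphisms $A^i(X,j)\xrightarrow{\cong}A^i(Y,j)$ for all $i,j$.

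Second, for the higher $G$--theory statement I would invoke the Bloch--Grothendieck--Riemann--Roch isomorphism (\ref{KK}) recalled in remark \ref{G}, namely $\gr_\gamma^{n-i}G_j(X)_\QQ\cong A_i(X,j)$. Rationally the $\gamma$--filtration splits (thanks to the Adams operations), so $G_j(X)_\QQ$ decomposes as a direct sum of higher Chow groups. Since correspondences act on rational $G$--theory compatibly with the $\gamma$--filtration (standard Riemann--Roch), summing over the graded pieces the isomorphisms already constructed yields the desired correspondence--induced isomorphism $G_j(X)_\QQ\cong G_j(Y)_\QQ$.

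The principal step requiring care is the first one: one must verify that higher Chow groups carry the claimed action of correspondences together with the expected functorial compatibilities. At the level of classical Chow groups ($j=0$) this is well known; for $j>0$ one needs Bloch's moving lemma and the proper pushforward in his simplicial model \cite{B2}, \cite{B3}, or equivalently the full motivic cohomology formalism \cite{Fr}, \cite{MVW}. Once this is admitted, the rest of the argument is a direct, essentially formal, application of Theorem \ref{main}.
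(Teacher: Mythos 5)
Your proposal is correct and follows the same route as the paper, which simply declares the statement ``immediate from the isomorphism of Chow motives $h(X)\cong h(Y)$'': you are just making explicit the standard facts that rational higher Chow groups (equivalently, motivic cohomology) are a realization of $\MM_{\rm rat}$ via the action of correspondences, and that rational $G$--theory decomposes into these via the $\gamma$--filtration isomorphism (\ref{KK}) of remark \ref{G}. Nothing further is needed.
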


\begin{proof} This is immediate from the isomorphism of Chow motives $h(X)\cong h(Y)$.
\end{proof}

\vskip1cm
\begin{nonumberingt} Thanks to Len, for telling me wonderful stories of Baba Yaga and vatrouchka.

\end{nonumberingt}

\vskip1cm

\end{document}